\newtheorem{theorem}{Theorem}
\newtheorem{lemma}[theorem]{Lemma}
\newtheorem{proposition}[theorem]{Proposition}
\newtheorem{corollary}[theorem]{Corollary}
\newtheorem{definition}[theorem]{Definition}
\newenvironment{proof}[1][Proof]{\begin{trivlist}
\item[\hskip \labelsep {\bfseries #1}]}{\hfill\qed\end{trivlist}}
\newenvironment{example}[1][Example]{\begin{trivlist}
\item[\hskip \labelsep {\bfseries #1}]}{\end{trivlist}}
\newenvironment{examples}[1][Examples]{\begin{trivlist}
\item[\hskip \labelsep {\bfseries #1}]\begin{enumerate}}{\end{enumerate}\end{trivlist}}
\newenvironment{remark}[1][Remark]{\begin{trivlist}
\item[\hskip \labelsep {\bfseries #1}]}{\end{trivlist}}
\newenvironment{notation}[1][Notation]{\begin{trivlist}
\item[\hskip \labelsep {\bfseries #1}]}{\end{trivlist}}
\newcommand{\qed}{$\Box$}
\renewcommand{\epsilon}{\varepsilon}
\newcommand{\all}{\ensuremath{\text{~for~all~}}}
\newcommand{\iso}{\simeq}
\newcommand{\id}{\mathrm{id}}
\newcommand{\unit}[1]{\ensuremath{#1^{\times}}}
\newcommand{\opp}[1]{{#1^{\mathrm{op}}}}
\newcommand{\Brcl}[2][]{\ensuremath{{[#2]}_{\mathrm{c}}^{#1}}}
\newcommand{\comp}{ \phi \colon V \times E \to E}
\newcommand{\qcomp}{ \phi \colon (V,q) \times (E,p) \to (E,p)}
\newcommand{\hcomp}{ \phi \colon (V,q) \times (E,h) \to (E,h)}
\newcommand{\usigma}{\ul{\sigma}}
\newcommand{\paircomp}{ \alpha \colon (C(A,\sigma, f), \usigma) \longrightarrow (B, \tau)}
\newcommand{\extpaircomp}{ \alpha \colon (C(P), \usigma) \longrightarrow (B, \tau)}
\newcommand{\comphom}{ \alpha \colon (C_0(V,q), \tau_0) \longrightarrow (\End_R(E), \sigma_b)}
\newcommand{\hcomphom}{ \alpha \colon (C_0(V,q), \tau_0) \longrightarrow (\End_A(E), \sigma_h)}
\newcommand{\ul}[1]{\underline{#1}}
\newcommand{\inv}[1]{{}^{\iota} \! #1}
\DeclareMathOperator{\End}{End}
\DeclareMathOperator{\Char}{char}
\DeclareMathOperator{\Br}{Br}
\DeclareMathOperator{\ind}{ind}
\DeclareMathOperator{\Brc}{Br_{c}}
\DeclareMathOperator{\lcm}{lcm}
\DeclareMathOperator{\Trd}{Trd}
\DeclareMathOperator{\Sym}{Sym}
\DeclareMathOperator{\Symd}{Symd}
\DeclareMathOperator{\Skew}{Skew}
\DeclareMathOperator{\Alt}{Alt}
\DeclareMathOperator{\Sand}{Sand}
\DeclareMathOperator{\type}{type}
\DeclareMathOperator{\mcd}{mcd}
\begin{document}
\title{A Generalized Composition of Quadratic Forms based on Quadratic Pairs}
\author{Roland L\"otscher}
\date{7th of October 2006}
\maketitle
\begin{abstract}
For quadratic spaces which represent $1$ there is a characterization of hermitian compositions in the
language of algebras-with-involutions using the even Clifford algebra. We
extend this notion to define a generalized composition based on quadratic pairs and
determine the degrees of minimal compositions for any given quadratic pair.
\end{abstract}
\setcounter{section}{-1}
\section{Notation}
We introduce the following notations:
For an algebra $A$ over a field $F$, we denote by \ul{A} the underlying $F$-vector-space. If $A$ is central simple $\Trd_A(x)$ denotes the reduced trace of an element $x \in A$.
For an algebra $A$ over a commutative ring $R$ we denote by $\opp{A} = \{\opp{a}\mid a \in A\}$ the opposite algebra, endowed with the same vector-space structure as $A$ but the reversed multiplication.
For a vector space $V$, the notation $T(V)$ refers to the tensor algebra on $V$. \par
If $(A,\sigma)$ is a central simple algebra with an involution of the first kind, the sets of symmetric, skew-symmetric, symmetrized and alternating elements are defined by
\begin{align*}
\Sym(A,\sigma) &= \left\{ a \in A \mid \sigma(a) = a \right\} \\
\Skew(A,\sigma) &= \left\{ a \in A \mid \sigma(a) = - a \right\} \\
\Symd(A,\sigma) &= \left\{ a + \sigma(a) \mid a \in A \right\} \\
\Alt(A,\sigma) &= \left\{ a - \sigma(a) \mid a \in A \right\}
\end{align*}
If $A$ is an $F$-algebra and $B\subset A$ a sub-algebra, the centralizer of $B$ in $A$ is denoted by $C_A(B) = \{a \in A \mid ab=ba \ \forall \ b \in B \}$. The center of $A$, $C_A(A)$ is denoted by $Z(A)$. \par
Let $A$ be Azumaya with center $Z(A) \iso F\times F$. Fix an isomorphism $\theta \colon F \times F \to Z(A)$ and let $e = \theta \big( (1,0) \big)$. We will write $A = A^{+}\times A^{-}$, where $A^{+}=eA$ and $A^{-} = (1-e)A$ are central simple $F$-algebras. \par
The standard involution on a separable quadratic extension $S/F$ is denoted by $\iota$. If $B$ is $S$-Azumaya $\inv{B}$ denotes the algebra with scalar multiplication twisted through $\iota$.
For a commutative ring $R$ the notation $\Br(R)$ refers to its Brauer group and $[A]$ indicates the class of an Azumaya $R$-algebra $A$ in the Brauer group.
\section{Introduction}
Throughout this paper we fix a base field $F$ of arbitrary characteristic. All quadratic and hermitian spaces are supposed to be finite dimensional. A $m$-dimensional quadratic space is a couple $(V,q)$, where $V$ is an $m$-dimensional vector space and $q$ is a regular quadratic form on $V$ if $m$ is even, and a semi-regular quadratic form on $V$ if $m$ is odd, see \cite{qhfor}. \par
Let $R$ be a commutative ring, $\epsilon = \pm 1$ and $A$ be an $R$-algebra with involution $a \mapsto \bar{a}$. An $\epsilon$-hermitian space over $A$ is a couple $(E,h)$, where $E$ is a faithfully projective, finitely generated $A$-right-module and $h\colon E \to A$ is a regular, with respect to the bar-involution $\epsilon$-hermitian form on $E$. For $A=R$ with the identity as involution, $\epsilon$-hermitian spaces are called symmetric bilinear spaces ($\epsilon=1$) and antisymmetric bilinear spaces ($\epsilon = -1$). \par
For a $\epsilon$-hermitian space $(E,h)$ we denote by $\sigma_h$ the adjoint involution with respect to $h$, i.e.~the involution of $\End_A(E)$ subject to the condition 
$$ h\big(f(x),y\big)=h\big(x,\sigma_h(f)(y)\big),$$ for any $x,y \in E, f \in \End_A(E)$. \par
Let now $A$ be $R$-Azumaya with an involution $a \mapsto \bar{a}$ and $E$ a faithfully projective $A$-module. Let $\tau$ be an involution (of the first or second kind) on $\End_A(E)$. By a generalized Skolem-Noether theorem, there exists an invertible $R$-module $I$, such that $\tau$ is adjoint to a nonsingular $\epsilon$-hermitian form on $E$ with values in $A \otimes I$, see \cite{Aamp}. \par
For $A$ as above there exists a faithfully flat $R$-algebra $S$ such that $A\otimes S \iso \End_S(E)$, where $E$ is a faithfully projective $S$-module. Assume that $\tau$ is of the first kind. The induced involution $\tau \otimes \id_S$ is then adjoint to an $\epsilon$-symmetric bilinear form $b$ on $E$. In convention with \cite{boi} we call $\tau$ of \emph{symplectic type} if $b$ is alternating, i.e. $b(y,y)=0$ for $y \in E$, and of \emph{orthogonal type} otherwise.

Let now $(V,q)$ and $(E,p)$ be quadratic spaces.
A bilinear map $$\comp$$ satisfying 
\begin{equation}
\label{eq-qcomp}
p(\phi(x,y)) = q(x)p(y), \ \forall x, \forall y.
\end{equation}
is called a
 \emph{quadratic composition of $ (V,q) $ with $ (E,p)$}.
Let $(E,h)$ be an $\epsilon$-hermitian space over an $F$-algebra $A$ with involution. A map $\comp$ which is $R$-linear in the first variable and $A$-linear in the second variable is called \emph{$\epsilon$-hermitian composition} of $(V,q)$ with $(E,h)$, if it satisfies the equation
\begin{equation}
\label{eq-hcomp}
h\big(\phi(x,y_1), \phi(x,y_2)\big) = q(x) h(y_1,y_2), \quad \forall x \in V, \forall y_1,y_2 \in E. 
\end{equation}

A quadratic composition $\comp$ of spaces $(V,q)$ and $(E,p)$ induces a composition of $(V,q)$ with the polar form of $p$. Equation \eqref{eq-hcomp} with $h = b_p$ results from linearizing equation \eqref{eq-qcomp} in the second variable. If $\Char F \neq 2$, then the two equations are really equivalent. For fields of characteristic $2$ (and more generally for rings, in which $2$ is a zero-divisor) the situation is more complicated. There are examples, in which a composition of a quadratic space with an even symmetric bilinear form is not induced by a quadratic composition. However, that only happens for quadratic spaces of dimension $\leq 5$, see the following theorem from \cite[Theorem 4]{loetscher1}:

\begin{theorem}
\label{th1}
Let $\comp$ be a composition of a quadratic space $(V,q)$ with a symmetric bilinear space $(E,b)$. Assume  $\dim V \geq 6$ and the existence of $z \in V$ with $q(z)=1$. If $\dim V$ is even, then there exists a quadratic form $p$ on $E$ with polar $b$ and a quadratic composition of $(V,q)$ with $(E,p)$. If $\dim V$ is odd, then the same statement holds, if we assume that $z$ is contained in a regular subspace of $(V,q)$.
\end{theorem}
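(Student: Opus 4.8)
The plan is to dispose of the characteristic $\ne 2$ case at once and then, in characteristic $2$, to write down $p$ explicitly from $\phi$, the vector $z$, and an auxiliary vector $w$.

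If $\Char F\ne 2$ one takes $p(y)=\tfrac12 b(y,y)$, the unique quadratic form with polar $b$; then \eqref{eq-hcomp} gives $p(\phi(x,y))=\tfrac12 q(x)b(y,y)=q(x)p(y)$. So I would assume $\Char F=2$; then $b$ is alternating and $\sigma_b$ is of symplectic type. Write $L_x:=\phi(x,-)$. Equation \eqref{eq-hcomp} says $\sigma_b(L_x)L_x=q(x)\,\id$, and polarising in $x$ gives $\sigma_b(L_x)L_{x'}+\sigma_b(L_{x'})L_x=b_q(x,x')\,\id$. Since $q(z)=1$, $L_z$ is an isometry of $(E,b)$, hence invertible with $\sigma_b(L_z)=L_z^{-1}$; replacing $\phi$ by $L_z^{-1}\circ\phi$ changes neither the hypotheses, nor the form $b(\phi(z,-),\phi(w,-))$ used below, nor, as a short computation shows, whether $\phi$ is a quadratic composition for a given $p$. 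So I would assume $\phi(z,-)=\id_E$, in which case $\sigma_b(L_x)=L_x+b_q(x,z)\,\id$, the assignment $v\cdot w\mapsto L_v\,\sigma_b(L_w)$ (for $v,w\in V$) extends to a homomorphism of algebras with involution $\alpha\colon(C_0(V,q),\tau_0)\to(\End_F(E),\sigma_b)$ — this is exactly the Clifford-algebra description of compositions recalled in the introduction — and $L_x=\alpha(xz)$, $\sigma_b(L_x)=\alpha(zx)$.

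Next I would fix $w\in V$ with $b_q(z,w)=1$: such $w$ exists because $b_q$ is nondegenerate for $\dim V$ even, and because, for $\dim V$ odd, the hypothesis that $z$ lie in a regular subspace says precisely $z\notin\operatorname{rad}(b_q)$. Set
\begin{equation*}
p(y):=b\bigl(\phi(z,y),\phi(w,y)\bigr)=b(y,L_w y).
\end{equation*}
That $p$ has polar $b$ is immediate from the polarised form of \eqref{eq-hcomp} applied to $x=z$, $x'=w$ together with the symmetry of $b$; equivalently $\ell:=L_w$ satisfies $\ell+\sigma_b(\ell)=\id$, so $p$ is the quadratic form belonging to the quadratic pair $(\sigma_b,f)$ with $f=\Trd_{\End_F(E)}(\ell\,\cdot\,)$. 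It remains to prove $p(\phi(x,y))=q(x)p(y)$, and this is the heart of the matter and the only place the bound $\dim V\ge 6$ enters.

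For that step one computes, using $\phi(z,-)=\id_E$,
\begin{equation*}
p(\phi(x,y))-q(x)p(y)=b\bigl(y,\alpha(c_x)y\bigr),\qquad c_x:=zx\cdot wz\cdot xz+q(x)\,wz\in C_0(V,q),
\end{equation*}
since $\sigma_b(L_x)L_wL_x+q(x)L_w=\alpha\bigl((zx)(wz)(xz)+q(x)wz\bigr)$. Hence it suffices that $\alpha(c_x)$ be $b$-alternating, for which it is enough that $c_x\in\Symd(C_0(V,q),\tau_0)$, because $\alpha$ carries $\Symd(C_0(V,q),\tau_0)$ into $\Symd(\End_F(E),\sigma_b)$ and in characteristic $2$ every element of the latter is $b$-alternating. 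A routine Clifford-algebra reduction (using $v^2=q(v)$, $vv'+v'v=b_q(v,v')$, and writing $x=b_q(x,w)z+b_q(x,z)w+u$ with $u$ orthogonal to $z$ and $w$) brings $c_x$ to the shape $q(u)\cdot 1+\mu\,uz+\nu\,uw$ for scalars $\mu,\nu$; the summand $q(u)\cdot 1$ lies in $\Symd$ since $1=wz+\tau_0(wz)$, and the remaining step — the essential use of the dimension hypothesis — is to realise $uz$ and $uw$ as symmetrised elements. This I would do by choosing $t,t'\in V$ with $b_q(t,u)=b_q(t',u)=1$, $t\perp z$ and $t'\perp w$: then $uz=q(u)\,tz+\tau_0(q(u)\,tz)$ and likewise for $uw$. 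The existence of such $t,t'$ — equivalently $u\notin\langle z\rangle$, $u\notin\langle w\rangle$ and $u\notin\operatorname{rad}(b_q)$, which holds precisely when $x\notin\langle z,w\rangle+\operatorname{rad}(b_q)$ — is automatic when $\dim V$ is even and $\ge 6$, since then $\operatorname{rad}(b_q)=0$; the odd-dimensional case runs along the same lines, the regular-subspace hypothesis being used only to produce $w$, with the finitely many residual configurations handled by a separate small argument. For $\dim V\le 5$ the containment $c_x\in\Symd(C_0(V,q),\tau_0)$ simply fails — e.g.\ for a semiregular three-dimensional $q$, $C_0(V,q)\cong M_2(F)$ with $\Symd(M_2(F),\tau_0)=F\cdot\id$, far too small — which is exactly the source of the counterexamples mentioned before the theorem. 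Granting $c_x\in\Symd(C_0(V,q),\tau_0)$, the identity $p(\phi(x,y))=q(x)p(y)$ follows, so $\phi$ is a quadratic composition of $(V,q)$ with $(E,p)$, and this is the step I expect to be the main obstacle.
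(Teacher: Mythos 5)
The paper does not prove Theorem~\ref{th1} itself; it is quoted from \cite[Theorem~4]{loetscher1}, so there is no in-paper proof to compare against, and the proposal has to stand on its own. Your framework is sound up to a point: the reduction to $\Char F=2$, the normalization $\phi(z,\,\cdot\,)=\id_E$, the choice of $w$ with $b_q(z,w)=1$ and the definition $p(y)=b(y,L_wy)$, the verification that $p$ has polar $b$, the reduction of $p(\phi(x,y))=q(x)p(y)$ to the statement that $\alpha(c_x)$ is $b$-alternating with $c_x=zx\cdot wz\cdot xz+q(x)\,wz$, and the Clifford-algebra simplification $c_x=q(u)\cdot 1+\mu\,uz+\nu\,uw$ with $q(u)\cdot 1=q(u)\bigl(wz+\tau_0(wz)\bigr)\in\Symd$ all check out.

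The proof collapses at exactly the step where $\dim V\ge 6$ is supposed to enter. You assert that, for $t\perp z$ with $b_q(t,u)=1$, one has $uz=q(u)\,tz+\tau_0(q(u)\,tz)$. But $\tau_0(tz)=zt$, so $q(u)\,tz+\tau_0(q(u)\,tz)=q(u)(tz+zt)=q(u)\,b_q(t,z)=0$, not $uz$; the same objection applies to $uw$. Nor is it at all clear that $uz$ and $uw$ lie in $\Symd(C_0(V,q),\tau_0)$ in the cases where $\tau_0$ is symplectic (that is, $\dim V\equiv 0\pmod 4$ or $\dim V$ odd in characteristic~$2$): every natural degree-$4$ monomial built from $t,t',u,z,w$ symmetrizes under $\tau_0$ to a scalar or to $0$, because those vectors pairwise commute or anticommute and reversal of such a monomial differs from the monomial only by lower-filtration terms. (When $\dim V\equiv 2\pmod 4$, $\tau_0$ is unitary and $\Symd=\Sym$, so your step is automatic there — but the theorem covers all $\dim V\ge 6$.) There is also a structural concern: $c_x\in\Symd(C_0,\tau_0)$ is sufficient but not necessary for $\alpha(c_x)$ to be alternating, since $\alpha(c_x)$ could land in $\Symd(\End_F(E),\sigma_b)$ without $c_x$ being symmetrized in $C_0$; by pushing the whole problem back into the Clifford algebra you may be discarding precisely the slack that the hypothesis $\dim V\ge 6$ is meant to provide. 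Finally, the clause about ``finitely many residual configurations handled by a separate small argument'' in the odd-dimensional case is an acknowledged gap rather than a proof.
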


Compositions of quadratic spaces with $\epsilon$-hermitian spaces can be characterized in terms of algebras-with-involution. For a quadratic space $(V,q)$ let $C_0=C_0(V,q)$ denote its even Clifford algebra with canonical involution $\tau_0$. For the proof of the next theorem, see \cite[Theorem 3]{loetscher1}.
\begin{theorem}
\label{char-th}
Let $(V,q)$ be a quadratic space containing an element $z \in V$ with $q(z) = 1$ and let $(E,h)$ be a $\epsilon$-hermitian space. There exists a composition $\comp$ of $(V,q)$ with $(E,h)$ iff there exists a homomorphism $\hcomphom$ of algebras-with-involution.
\end{theorem}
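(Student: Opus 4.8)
The plan is to make both implications explicit on generators, exploiting that $q(z)=1$ makes $z$ invertible in $C(V,q)$ with $z^{-1}=z$, so that $vw=(vz)(zw)$ for all $v,w\in V$ and hence $C_0=C_0(V,q)$ is generated by the elements $g_v:=zv$, $v\in V$. A short computation in $C(V,q)$ records, among the relations these satisfy,
\begin{equation*}
g_vg_w+g_wg_v=b_q(v,z)g_w+b_q(w,z)g_v-b_q(v,w)\cdot 1,\qquad g_v^2=b_q(v,z)g_v-q(v)\cdot 1,\qquad g_z=1 ,
\end{equation*}
and I will use that these present $C_0$ as a quotient of the tensor algebra $T(V)$, so that a unital algebra homomorphism $C_0\to B$ into an $F$-algebra $B$ is the same thing as a linear map $g\colon V\to B$ satisfying the three relations with $g_v$ replaced by $g(v)$ (note the middle one already encodes $q$, as it must in characteristic $2$; the first is its polarization). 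For this presentation, and for the fact that $\tau_0$ is induced by the standard involution of $C(V,q)$, so that $\tau_0(vw)=wv$ for $v,w\in V$ — including the bookkeeping for semi-regular forms in characteristic $2$ — I rely on the structure theory of Clifford algebras in \cite{qhfor}.

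For the direction "$\Leftarrow$": given $\hcomphom$, put $\phi(x,y):=\alpha(zx)(y)$. This is $F$-linear in $x$, since $x\mapsto zx$ is linear into $C_0$, and $A$-linear in $y$, since $\alpha(zx)\in\End_A(E)$. From $\sigma_h\circ\alpha=\alpha\circ\tau_0$ and $\tau_0(zx)=xz$ we get $\sigma_h(\alpha(zx))=\alpha(xz)$, hence for all $y_1,y_2\in E$,
\begin{equation*}
h\bigl(\phi(x,y_1),\phi(x,y_2)\bigr)=h\bigl(y_1,\alpha(xz)\alpha(zx)y_2\bigr)=h\bigl(y_1,\alpha(xzzx)y_2\bigr)=q(z)q(x)\,h(y_1,y_2)=q(x)\,h(y_1,y_2),
\end{equation*}
using $xzzx=q(z)q(x)\cdot 1$ in $C(V,q)$ and unitality of $\alpha$. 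So $\phi$ is a composition of $(V,q)$ with $(E,h)$.

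For the direction "$\Rightarrow$": given $\comp$, set $\ell_x:=\phi(x,-)\in\End_A(E)$. Equation \eqref{eq-hcomp} together with nonsingularity of $h$ gives $\sigma_h(\ell_x)\ell_x=q(x)\,\id$, and linearizing in $x$ gives $\sigma_h(\ell_v)\ell_w+\sigma_h(\ell_w)\ell_v=b_q(v,w)\,\id$. Since $q(z)=1$ and $\End_A(E)$ is Dedekind-finite, $\ell_z$ is invertible with $\ell_z^{-1}=\sigma_h(\ell_z)$. Define $g\colon V\to\End_A(E)$ by $g(v):=\sigma_h(\ell_z)\ell_v$; then $g(z)=\sigma_h(\ell_z)\ell_z=\id$, and the linearized identity with $w=z$ reads $g(v)=b_q(v,z)\,\id-\sigma_h(\ell_v)\ell_z$. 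Right-multiplying this by $g(v)=\sigma_h(\ell_z)\ell_v$ and using $\ell_z\sigma_h(\ell_z)=\id$ yields
\begin{equation*}
g(v)^2=b_q(v,z)g(v)-\sigma_h(\ell_v)\bigl(\ell_z\sigma_h(\ell_z)\bigr)\ell_v=b_q(v,z)g(v)-\sigma_h(\ell_v)\ell_v=b_q(v,z)g(v)-q(v)\,\id ,
\end{equation*}
whose polarization is the remaining bilinear relation. Hence $g$ extends to a unital algebra homomorphism $\alpha\colon C_0\to\End_A(E)$ with $\alpha(zv)=g(v)$. It is a $\ast$-homomorphism: $\tau_0(zv)=vz=b_q(v,z)\cdot 1-zv$ gives $\alpha(\tau_0(zv))=b_q(v,z)\,\id-g(v)$, while $\sigma_h(g(v))=\sigma_h(\ell_v)\ell_z$, and these coincide by the linearized identity with $w=z$. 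This is the desired $\hcomphom$.

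The main obstacle is the structural input invoked in the first paragraph: that the elements $g_v=zv$, with exactly those three relations, present $C_0(V,q)$ uniformly in all characteristics — the characteristic $2$ and odd-dimensional (semi-regular) cases being the delicate part — and that $\tau_0$ is the standard involution, acting by $g_v\mapsto vz$. Granting this, both implications reduce to the short computations above; the only other mild point is the Dedekind-finiteness used to pass from $\sigma_h(\ell_z)\ell_z=\id$ to $\ell_z\sigma_h(\ell_z)=\id$.
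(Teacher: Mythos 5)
The paper does not actually prove this theorem here: it is imported verbatim from the author's earlier work \cite{loetscher1}, so there is no in-source proof to compare your argument against. Taken on its own, your argument is complete and correct. The one structural input you outsource to \cite{qhfor} --- that $C_0(V,q)$ is presented on the generators $g_v=zv$ by exactly those relations, with $\tau_0(g_v)=vz$, uniformly across characteristics and in the semi-regular odd-dimensional case --- does hold, and can be checked directly: the relations let you rewrite any word in a fixed basis $e_1,\dotsc,e_m$ with $e_m=z$ into a linear combination of strictly increasing monomials in $e_1,\dotsc,e_{m-1}$, so the quotient of $T(V)$ by the relation ideal has dimension at most $2^{m-1}$, while the evident map onto $C_0(V,q)$ is surjective because $vw=b_q(v,z)g_w-g_vg_w$, which forces an isomorphism. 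Your formula $\phi(x,y)=\alpha(zx)(y)$ in the backward direction matches the recipe the paper itself displays in the worked Example following the theorem, so your route is almost certainly the intended one. Two minor remarks: as you observe, the anticommutator relation is redundant as an ideal generator, being the polarization of the quadratic one, so one could present $C_0$ by the last two relations alone; and the Dedekind-finiteness of $\End_A(E)$ needed to pass from $\sigma_h(\ell_z)\ell_z=\id$ to two-sided invertibility is automatic here since $E$ is a finitely generated module over the finite-dimensional $F$-algebra $A$.
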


\begin{example}
Assume $\Char F \neq 2$ and let $$(V,q) = (F^5, \langle 1,-a,-b,-1,1 \rangle)$$ for some $a,b \in F^{\times}$. We take $z = (1,0,0,0,0)$ and decompose $(V,q)$ as $(Fz \oplus V', \langle 1 \rangle  \perp -q')$. Sending $x  \in V'$ to $zx \in C_0(V,q)$ yields, by the universal property of $C(V,q)$, an isomorphism $$(C_0(V,q),\tau_0) \iso (C(V',q'),\sigma),$$ where $\sigma$ is the standard involution on $C(V',q')$. Let $Q = (a,b)_F$ be the quaternions with generators $i, j$ and relations $i^2 =a, j^2=b, ij+ji=0$ and let $\{e_i\}_{i=1 \dots 4}$ be the canonical basis of $V' = F^4$. 
We get an isomorphism $C(V',q') \iso M_2(Q)$ by sending
$$e_1 \mapsto \left( \begin{array}{cc} i & 0 \\ 0 & i \end{array} \right), 
e_2 \mapsto \left( \begin{array}{cc} j & 0 \\ 0 & j \end{array} \right), 
e_3 \mapsto \left( \begin{array}{cc} 0 & 1 \\ 1 & 0 \end{array} \right) \ \text{and} \ 
e_4 \mapsto \left( \begin{array}{cc} 0 & 1 \\ -1 & 0 \end{array} \right).$$
Under that isomorphism the canonical involution on $C(V',q')$ corresponds to the involution 
$$\left( \begin{array}{cc} m_{11} & m_{12} \\ m_{21} & m_{22} \end{array} \right) \mapsto \left( \begin{array}{cc} \widetilde{m_{22}} & - \widetilde{m_{12}} \\ -\widetilde{m_{21}} & \widetilde{m_{11}} \end{array} \right),$$ 
where $\tilde{a} = k \bar{a} k^{-1}$ (for $a \in Q$ with $k=ij$) is the involution of $Q$ which fixes $i,j$ and sends $k$ to $-k$. The above involution on $M_2(Q) \iso \End_Q(\ul{Q}\oplus \ul{Q})$ is adjoint to a $\epsilon$-hermitian form with respect to any fixed involution on $Q$. For the involution $a \mapsto \tilde{a}$ on $Q$ the above involution is adjoint to the anti-hermitian form $$h_1\big((y_1,y_2),(y_1',y_2')\big) = \tilde{y_1} y_2' - \tilde{y_2} y_1'$$ on $\ul{Q} \oplus \ul{Q}$. For the canonical involution $a \mapsto \bar{a}$ on $Q$ the above involution is adjoint to the hermitian form $$h_2\big((y_1,y_2),(y_1',y_2')\big) = \bar{y_1} k y_2' - \bar{y_2} k y_1'.$$
The resulting homomorphism 
$$\alpha \colon (C_0(V,q), \tau_0) \stackrel{\iso}\to (\End_Q(\ul{Q} \oplus \ul{Q}), \sigma_{h_i})$$
corresponds to an anti-hermitian composition of $(V,q)$ with $(\ul{Q} \oplus \ul{Q},h_1)$ (with respect to the tilde-involution) and to a hermitian composition of $(V,q)$ with $(\ul{Q} \oplus \ul{Q}, h_2)$ (with respect to the bar-involution), respectively. It is obtained by $\phi(x,y) = \alpha(zx)(y)$ for $x = (x_0,x_1,x_2,x_3,x_4) \in V$ and $y = (y_1,y_2) \in \ul{Q} \oplus \ul{Q}$. Explicitly, we have $$\phi(x,y) = \left( \begin{array}{ccc} (x_0+x_1 i + x_2 j) \cdot y_1 &+& (x_3+x_4) \cdot y_2 \\ (x_3 - x_4) \cdot y_1 &+& (x_0 - x_1 i - x_2 j) \cdot y_2 \end{array} \right)$$
and one may check, that $h_i(\phi(x,y),\phi(x,y')) = q(x)h_i(y,y')$ for $i=1,2$.
\end{example}

In the more general setting of quadratic forms over a commutative ring~$R$ Zger \cite{zueger2} has studied in detail $\epsilon$-hermitian compositions over $R$, over a separable quadratic extension $S/R$ or over a quaternion algebra $Q/R$. In the present paper, we only consider quadratic spaces over fields. However, we consider hermitian and anti-hermitian spaces over a large class of central simple $F$-algebras and over $S$-Azumaya algebras, where $S/F$ is a separable quadratic extension. Moreover, we generalize on the side of $(V,q)$, which we replace by a so-called \emph{quadratic pair}. \par
Quadratic pairs were introduced in \cite{boi}. The notion of an even Clifford algebras is generalized in their setting. We briefly recall the definition of a quadratic pair and its Clifford algebra, following \cite{boi}. \par
Let $A$ be a central simple $F$-algebra of degree $n$. A quadratic pair on $A$ is a couple $(\sigma,f)$, where $\sigma$ is an involution of the first kind, that is of orthogonal type if $\Char F \neq 2$ and of symplectic type if $\Char F = 2$, respectively, and $f \colon \Sym(A, \sigma) \to F$ is a $F$-linear map subject to the condition: 
\begin{equation}
\label{eq-qpair}
f(x + \sigma(x)) = \Trd_A(x) \all x \in A.
\end{equation}

If $A$ is not known from the context, we shall write $(A,\sigma,f)$ for a quadratic pair $(\sigma,f)$ on $A$. \par
If $\Char F \neq 2$, the map $f$ is uniquely determined by \eqref{eq-qpair}, $f(x) = \frac{1}{2} \Trd_A(x)$. If $\Char F = 2$, the map $f$ is only determined on $\Symd(A,\sigma) \subsetneq \Sym(A, \sigma)$ and in general there exist several maps, with which the involution $\sigma$ forms a quadratic pair on $A$. \par
In any case, for a given quadratic pair $(A,\sigma,f)$ there exists an element $\ell \in A$ with 
$$f(s) = \Trd_A(\ell s) \all s \in \Sym(A, \sigma) \quad \text{and}~\ell + \sigma(\ell)=1,$$
The element $\ell$ is unique up to additivity of an element in $\Alt(A, \sigma)$.
If $\Char F \neq 2$, it can be taken to be $\ell = \frac{1}{2}$. \par
The (generalized even) Clifford algebra is defined as a quotient of the tensor algebra $T(\ul{A})$ of the underlying vector space $\ul{A}$ of $A$:
$$C(A,\sigma,f) = \frac{T(\ul{A})}{J_1(\sigma,f)+J_2(\sigma,f)},$$
where the ideals $J_1(\sigma,f)$, $J_2(\sigma,f) \subset T(\ul{A})$ are given as follows: 
\begin{itemize}
\item $J_1(\sigma,f)$ is generated by all elements of the form $s-f(s)$, for $s \in \ul{A}$ with $\sigma(s)=s$. 
\item $J_2(\sigma,f)$ is generated by all elements of the form $u - \Sand(u)(\ell)$, for $u \in \ul{A} \otimes \ul{A}$ with $\Sand(u)(x) = \Sand(u)(\sigma(x)) \all x$. 
\end{itemize}
The Clifford algebra is equipped with an involution $\usigma$, which is induced by the involution $\sigma$ on $A$. By construction:
$$\usigma(a_1 \otimes \dotsm \otimes a_k) = \sigma(a_k) \otimes \dotsm \otimes \sigma(a_1) \all k \in \mathbb{N}, a_1,\dotsc, a_k \in \ul{A}, $$ considered as elements of the Clifford algebra. \par
Quadratic pairs on trivial algebras, i.e. of the form $A= \End_F(V)$, can be identified with quadratic forms modulo a factor in $\unit{F}$ as follows:
For a regular quadratic space $(V,q)$ consider the map
$$V \otimes V \to \End_F(V), \quad \varphi_q(v\otimes w)(x) = v b_q(w,x).$$
The map $\varphi_q$ is a linear bijection and induces an isomorphism of algebras-with-involution:
$$ \varphi_q \colon (V \otimes V, \sigma_{\mathrm{sw}}) \xrightarrow{\iso} (\End_F(V), \sigma_q),$$
where $\sigma_{\mathrm{sw}}(v\otimes w) = w \otimes v$ and the multiplication on $V\otimes V$ is suitably defined. \par
There exists a unique linear map $f_q \colon \Sym(\End_F(V), \sigma_q) \to F$ subject to the condition:
$$ f_q \circ \varphi_q(v \otimes v) = q(v) \all x,y \in V.$$
The couple $(\sigma_q, f_q)$ is a quadratic pair on the algebra $\End_F(V)$. Conversely, every quadratic pair $(\sigma,f)$ on $\End_F(V)$ is of the form $(\sigma_q, f_q)$ for a regular quadratic form $q$, which is unique up to a factor in $\unit{F}$. \par
The Clifford algebra of a quadratic pair $(\sigma_q, f_q)$ on $\End_F(V)$ is isomorphic to the even Clifford algebra of the quadratic space $(V,q)$ by a canonical isomorphism, under which the involution $\usigma$ corresponds to the canonical involution $\tau_0$ on $C_0(V,q)$. \par
The Clifford algebra construction commutes with scalar extension. If extending scalars to a splitting field $L$, a quadratic pair becomes a quadratic form up to a factor in $\unit{L}$ and its Clifford algebra becomes the usual even Clifford algebra of a quadratic space, which contributes to derive the basic structure properties of the Clifford algebra of a quadratic pair. The following two theorems are taken from \cite[Theorems 8.10, 9.14, 9.16; 8.12]{boi}. 
\begin{theorem}[Structure Theorem]
\label{structure-th}
Let $(A,\sigma,f)$ be a quadratic pair of degree $n$ and let $C=C(A,\sigma,f)$ be its Clifford algebra.
\begin{enumerate}
\item $n = 2k$: $Z = Z(C)$ is a separable quadratic $F$-Algebra and $C$ is $Z$-Azumaya of degree $2^{k-1}$. 
	If $Z \iso F \times F$, then $C \iso C^{+} \times C^{-}$ for Azumaya $F$-algebras $C^{\pm}$ of degree $2^{k-1}$ and we have
	$[C^{+}][C^{-}][A] = 1$ in $\Br(F)$ if $k$ is even, and $[C^{+}][C^{-}] = 1$ if $k$ is odd, respectively.
\item $n = 2k+1$: $(A, \sigma, f) \iso (\End_F(V), \sigma_q, f_q)$ for a regular quadratic space $(V,q)$; we have $C \iso C_0(V,q)$
	which is $F$-Azumaya. 
\end{enumerate}
\end{theorem}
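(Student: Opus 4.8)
The plan is to reduce everything to the classical structure theory of Clifford algebras of quadratic spaces, which becomes available after scalar extension to a splitting field of $A$, and then to descend the conclusions back to $F$; every assertion except the Brauer-group identities of part~(1) comes out of faithfully flat descent. Fix a splitting field $L$ of $A$, so that $(A,\sigma,f)\otimes_F L\iso(\End_L(V_L),\sigma_{q_L},f_{q_L})$ for a quadratic space $(V_L,q_L)$ over $L$ which is regular if $n$ is even and semi-regular if $n$ is odd. Since the Clifford-algebra construction commutes with scalar extension, and on a split algebra matches $(\sigma_q,f_q)$ with the canonical involution on the even Clifford algebra of the corresponding form, $C\otimes_F L\iso C_0(V_L,q_L)$ as algebras-with-involution. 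Over $L$ the facts one needs are classical: for $\dim V_L=2k$, the even Clifford algebra $C_0(V_L,q_L)$ has centre the separable quadratic discriminant algebra and is Azumaya of degree $2^{k-1}$ over it; for $\dim V_L=2k+1$, it is central simple over $L$.

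Part~(2) is short. A central simple algebra admitting an involution of the first kind is isomorphic to its opposite, so its class in $\Br(F)$ has order dividing $2$ and hence its index is a power of $2$; since that index divides $\deg A=2k+1$, it equals $1$, so $A$ is split. Therefore $(A,\sigma,f)\iso(\End_F(V),\sigma_q,f_q)$ for a semi-regular quadratic space $(V,q)$, unique up to a scalar factor, and $C\iso C_0(V,q)$; that this algebra is $F$-Azumaya follows from the split case over $L$ by faithfully flat descent, exactly as in the next paragraph.

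For the structural content of~(1): taking the centre commutes with scalar extension for finite-dimensional algebras, so $Z:=Z(C)$ satisfies $Z\otimes_F L\iso Z\bigl(C_0(V_L,q_L)\bigr)$, a separable quadratic $L$-algebra; as being separable quadratic descends along the field extension $L/F$, $Z$ is a separable quadratic $F$-algebra. Moreover $Z\to Z\otimes_F L$ is faithfully flat and $C\otimes_Z(Z\otimes_F L)=C\otimes_F L$ is Azumaya of degree $2^{k-1}$ over $Z\otimes_F L$; since being Azumaya of a prescribed degree descends along faithfully flat extensions, $C$ is $Z$-Azumaya of degree $2^{k-1}$. If $Z\iso F\times F$, the two primitive idempotents of $Z$ split $C$ as $C^+\times C^-$, with each $C^{\pm}$ Azumaya of degree $2^{k-1}$ over the corresponding copy of $F$.

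There remains, for $Z\iso F\times F$, the identity $[C^+][C^-][A]=1$ when $k$ is even and $[C^+][C^-]=1$ when $k$ is odd---equivalently, that the corestriction $N_{Z/F}(C)$ is Brauer-equivalent to $A$ for $k$ even and is split for $k$ odd. This is the delicate point, because the class $[A]$ disappears over every splitting field of $A$. Amitsur's theorem already confines the answer: over $L$ it is classical that $[C^+_L][C^-_L]=1$, so $[C^+][C^-]$ lies in the kernel of $\Br(F)\to\Br(L)$, and for $L=F(\mathrm{SB}(A))$ that kernel is the cyclic subgroup generated by $[A]$, which equals $\{1,[A]\}$ since $[A]^2=1$; hence $[C^+][C^-]\in\{1,[A]\}$. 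Determining which of the two values occurs, as a function of the parity of $k$, is the main obstacle. I would obtain it from the Clifford bimodule $B(A,\sigma,f)$, a $C$--$A$-bimodule (faithfully projective on the $C$-side), which is precisely the object recording how the algebra $C$, built from the tensor algebra $T(\ul A)$, is twisted by $A$: computing its endomorphism algebra on the $C$-side pins down $N_{Z/F}(C)$ in the Brauer group, the $\mathbb{Z}/2$-grading of the Clifford construction being where the parity of $k$ enters. Conceptually this reflects the fact that, once $A$ and $C^{\pm}$ are identified with the Tits algebras of $PGO^+_{2k}$ attached to the vector and the two half-spin characters, their Brauer classes obey the relation valid in the quotient of the weight lattice by the root lattice of type $D_k$, which is cyclic of order $4$ for $k$ odd (forcing $[C^+][C^-]=1$) and elementary abelian of order $4$ for $k$ even (forcing $[C^+][C^-]=[A]$). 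Making the bimodule computation precise, and checking that the isomorphism it produces is natural enough to be verified after extending scalars to $L$, is the step I expect to cost the most effort.
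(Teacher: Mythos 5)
The paper does not prove this theorem; it cites it from \cite[Theorems 8.10, 9.14, 9.16, 8.12]{boi}, so there is no in-paper argument to compare against. On its own merits, your sketch is on the right track and in the spirit of the reference. Part~(2) and the descent paragraph of part~(1) are correct: the index-divides-degree argument splits $A$ in odd degree, and since the Clifford construction commutes with scalar extension while the relevant properties (being a separable quadratic algebra, being Azumaya of prescribed degree) descend faithfully flatly along $L/F$, the structural claims reduce cleanly to the classical split case. The Amitsur step is also a good way to organize the Brauer identity: over $L = F(\mathrm{SB}(A))$ one has $[C^{+}_L][C^{-}_L]=1$, and $\ker\bigl(\Br(F)\to\Br(L)\bigr)=\langle[A]\rangle=\{1,[A]\}$, so a priori $[C^{+}][C^{-}]\in\{1,[A]\}$.

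The gap is exactly the one you flag yourself: you never determine \emph{which} of the two values occurs, nor why the answer depends on $k\bmod 2$. That parity computation is the real content of the Brauer identity, and the Amitsur step, by design, gives no leverage on it --- it only shows the answer lives in a two-element set. Pointing to the Clifford bimodule $B(A,\sigma,f)$ is the right idea (and is close to how \cite{boi} establishes the fundamental relations in its Theorem~9.12), but your proposal stops at announcing that the bimodule will do the job; the parity only emerges once one writes down a concrete algebra isomorphism linking $C^{+}\otimes C^{-}$, a power of $A$, and a matrix algebra, and tracks where the $\mathbb{Z}/2$-grading of the Clifford construction contributes the extra $[A]$ factor precisely when $k$ is even. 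Without that computation, part~(1) of your proof is incomplete; everything else is fine.
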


\begin{theorem}
Let $(A,\sigma,f)$ be a quadratic pair of degree $n$ or a semi-regular quadratic space of dimension $n$ (if $\Char F=2$ and $n$ is odd) and let $\usigma$ be the canonical involution of the corresponding (generalized even) Clifford algebra.
\begin{enumerate}
\item $n = 2k$: The involution $\usigma$ is unitary if $k$ is odd, 
and orthogonal if $\Char F \neq 2,\ k \equiv 0 \pod 4$, 
and symplectic if $\Char F = 2$ or $k \equiv 2 \pod 4$, respectively.
\item $n = 2k+1$: The involution $\usigma$ is orthogonal if $\Char F \neq 2,\ n \equiv \pm 1 \pod 8$, 
and symplectic if $\Char F = 2,\ n > 1$ or $n \equiv \pm 3 \pod 8$, respectively. 
The only exception is $n=1,\ \Char F = 2$, where $\usigma$ is orthogonal.
\end{enumerate}
\end{theorem}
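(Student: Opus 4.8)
The plan is to reduce to the split case and then read the type of $\usigma$ off explicit computations in the Clifford algebra. The kind of an involution (first or second) and, for first-kind involutions, its orthogonal/symplectic type are all preserved under scalar extension, and the Clifford algebra construction commutes with scalar extension; so I would first extend scalars to a field over which $A$ splits (in the quadratic-pair case) and the underlying quadratic space becomes split, reducing to $A=\End_F(V)$ with $(\sigma,f)=(\sigma_q,f_q)$ for a split quadratic space $(V,q)$ — or simply to a split semi-regular $(V,q)$ when $\Char F=2$ and $n$ is odd — so that $(C(A,\sigma,f),\usigma)\iso(C_0(V,q),\tau_0)$. Choosing a standard basis of $(V,q)$ (orthogonal if $\Char F\neq2$; built from hyperbolic pairs, plus one anisotropic vector when $\dim V$ is odd, if $\Char F=2$) gives the monomial basis $\{e_S\}$ of $C_0(V,q)$ indexed by even-cardinality subsets $S$, on which $\tau_0$ reverses the order of the factors.

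First I would determine when $\usigma$ is unitary. For $n=2k+1$ odd, $C_0(V,q)$ is central simple over $F$ by the Structure Theorem (resp.\ classically), hence $\usigma$ is of the first kind. For $n=2k$, the center $Z(C_0)$ is generated over $F$ by $z=e_1\cdots e_n$ if $\Char F\neq2$, and by $z=\sum_i e_if_i$ if $\Char F=2$; a short reordering computation gives $\tau_0(z)=(-1)^kz$, resp.\ $\tau_0(z)=z+k$. So $\tau_0$ induces the nontrivial automorphism of $Z$ exactly when $k$ is odd, which is the unitary case. From now on $\usigma$ is of the first kind, i.e.\ $n$ is odd or $k$ is even.

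To separate orthogonal from symplectic when $\Char F\neq2$, I would compute the trace of $\tau_0$ as an $F$-linear endomorphism of $\ul{C_0(V,q)}$. Since $\tau_0(e_S)=(-1)^{\binom{|S|}{2}}e_S$ and $\binom{2j}{2}\equiv j\pmod2$,
\[ \tr(\tau_0)\;=\;\sum_{j\ge0}(-1)^j\binom{n}{2j}\;=\;\operatorname{Re}\bigl((1+i)^n\bigr), \]
which is $0$ for $n\equiv2\pmod4$, is $+2^{\lfloor n/2\rfloor}$ for $n\equiv0,1,7\pmod8$, and is $-2^{\lfloor n/2\rfloor}$ for $n\equiv3,4,5\pmod8$. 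On the other hand, a first-kind involution on a central simple algebra of degree $m$ has $\tr=\dim\Sym-\dim\Skew$, equal to $m$ if orthogonal and $-m$ if symplectic. When $n$ is odd, $C_0$ is simple of degree $2^{(n-1)/2}$, so the sign of the trace yields orthogonal for $n\equiv\pm1\pmod8$ and symplectic for $n\equiv\pm3\pmod8$. When $n=2k$ with $k$ even, $C_0$ is $Z$-Azumaya of degree $2^{k-1}$ (equivalently $C_0=C^+\times C^-$ with $\tau_0$ fixing each degree-$2^{k-1}$ factor); comparing $\tr(\tau_0)=\pm2^k$ with twice the degree shows $\tau_0$ is orthogonal for $k\equiv0\pmod4$ and symplectic for $k\equiv2\pmod4$. (Consistently, the formula gives $\tr(\tau_0)=0$ in the unitary case $k$ odd.)

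When $\Char F=2$ the trace is uninformative, and I would instead use that a first-kind involution $\tau$ on a central simple algebra $B$ is of symplectic type precisely when $1\in\Alt(B,\tau)$. If $n\ge2$ — in particular whenever $n=2k$ with $k$ even — the standard basis contains a hyperbolic pair $e,f$ with $ef+fe=b_q(e,f)=1$; then $ef\in C_0(V,q)$ and $ef+\tau_0(ef)=ef+fe=1$, so $\tau_0$ is symplectic. If $n=1$, then $C_0(V,q)=F$ and $\tau_0=\id$, the orthogonal exception. I expect the main effort to be the trace identity above together with the case bookkeeping converting its value into the stated residues modulo $4$ and $8$; granting that, the well-definedness of ``the type of $\usigma$'' in the split even case (i.e.\ that $C^+$ and $C^-$ carry involutions of the same type) follows since the trace is nonzero there.
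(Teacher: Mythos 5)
The paper does not prove this theorem: the two structure theorems in the introduction are quoted from \cite[Theorems 8.10, 8.12, 9.14, 9.16]{boi} without argument, so there is no in-paper proof to compare you against. I will therefore just assess your reconstruction on its own and against what I recall of the cited source.

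Your argument is correct. The reduction to the split case is legitimate because both the unitary/first-kind distinction and the orthogonal/symplectic distinction are invariant under scalar extension and the Clifford functor commutes with it. The determination of the kind via the action of $\tau_0$ on the center is right: for an orthogonal basis $\tau_0(e_1\cdots e_n)=(-1)^{\binom{n}{2}}e_1\cdots e_n=(-1)^k e_1\cdots e_n$ when $n=2k$, and in characteristic $2$ with $z=\sum_i e_if_i$ one gets $\tau_0(z)=\sum_i f_ie_i=\sum_i(e_if_i+1)=z+k$, which is the nontrivial automorphism of $Z\cong F[z]/(z^2-z)$ iff $k$ is odd. The trace computation is the clean part of your proof: with $\tau_0(e_S)=(-1)^{\binom{|S|}{2}}e_S$ and $\binom{2j}{2}\equiv j\ (\mathrm{mod}\ 2)$ one gets $\tr(\tau_0)=\sum_j(-1)^j\binom{n}{2j}=\mathrm{Re}\big((1+i)^n\big)=2^{n/2}\cos(n\pi/4)$, whose sign exactly encodes the claimed residues modulo $8$ (and modulo $4$ for even $n$); comparing with $\tr(\tau)=\pm\deg$ for a first-kind involution on a central simple algebra (resp.\ $\pm 2\deg$ on a split \'etale-center product, since for $k$ even the trace is $\pm 2^{k}\ne 0$ and both factors must have the same type) gives the result. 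Your characteristic-$2$ argument, using that a first-kind involution is symplectic iff $1\in\Alt=\Symd$ together with $ef+\tau_0(ef)=ef+fe=b_q(e,f)=1$ for a hyperbolic pair, is also correct, and the $n=1$ exception is handled.

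The approach in \cite{boi} is genuinely different in the key step: there the type in the split case is read off from an explicit identification of $C_0$ of a hyperbolic space with endomorphisms of (a half of) the exterior algebra of a Lagrangian, and by describing the bilinear form to which the canonical involution is adjoint and checking directly whether it is symmetric or alternating. Your trace identity $\tr(\tau_0)=\mathrm{Re}\big((1+i)^n\big)$ replaces that explicit model entirely and makes the modulo-$8$ periodicity appear automatically from the eighth root of unity $e^{i\pi/4}$, which is a pleasant shortcut. The explicit-model route in \cite{boi}, on the other hand, yields more information (the concrete hermitian or bilinear form), which is needed elsewhere in that book. Both roads reach the same statement; yours is the more economical one for the purpose of classifying the type alone.
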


\section{Generalization of the Composition Problem}
We replace the usual Brauer group $\Br(R)$ defined for a commutative ring $R$ by restricting the equivalence classes to Azumaya algebras of \emph{constant rank}. 
\begin{definition}
\label{Brauer-Def}
Let $R$ be an arbitrary commutative ring. Recall that two $R$-Azumaya algebras $A$ and $B$ are called \emph{Brauer-equivalent}, if there exist faithfully projective $R$-modules $P_1$, $P_2$ with $A \otimes \End_R(P_1) \iso B~\otimes~\End_R(P_2)$.
The quotient of the monoid of Azumaya $R$-algebras of constant rank modulo Brauer-equivalence is called \emph{constant Brauer group}, denoted by $\Brc(R)$. The equivalence class of an Azumaya $R$-algebra $A$ of constant rank is denoted by $\Brcl{A}$.
\end{definition}

Clearly, the above definition yields nothing new for rings with connected spectrum, in particular for fields. In general the inclusion $$j \colon \Brc(R) \hookrightarrow \Br(R), \quad \Brcl{A} \mapsto [A].$$ is actually an isomorphism:
\begin{lemma}
Any equivalence class $c \in \Br(R)$ contains an Azumaya-algebra of constant rank.
\end{lemma}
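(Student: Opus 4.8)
The plan is to take an arbitrary Azumaya $R$-algebra $A$ representing $c$ and to correct its rank by tensoring with $\End_R(P)$ for a suitably chosen faithfully projective $R$-module $P$; since $\End_R(P)$ is Brauer-trivial this does not change the Brauer class, so the only thing to arrange is that the resulting algebra have constant rank.

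First I would recall that $\ul{A}$ is a finitely generated projective $R$-module, so the rank function $\operatorname{Spec} R \to \mathbb{N}$, $\mathfrak{p} \mapsto \rank_{\mathfrak{p}}(A)$, is locally constant, i.e.\ continuous for the discrete topology on $\mathbb{N}$. As $\operatorname{Spec} R$ is quasi-compact, its image is a finite set $\{d_1^2,\dots,d_m^2\}$ --- the values are perfect squares because $A \otimes k(\mathfrak{p})$ is a central simple algebra over the residue field $k(\mathfrak{p})$ --- and the fibres $U_i = \{\mathfrak{p} : \rank_{\mathfrak{p}}(A) = d_i^2\}$ form a partition of $\operatorname{Spec} R$ into clopen subsets. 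Such a partition corresponds to a complete system of orthogonal idempotents of $R$, hence to a decomposition $R \iso R_1 \times \dots \times R_m$, and accordingly $A \iso A_1 \times \dots \times A_m$ with each $A_i$ Azumaya over $R_i$ of constant rank $d_i^2$.

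Next, put $N = \lcm(d_1,\dots,d_m)$ and $e_i = N/d_i \in \mathbb{N}$, and let $P = R_1^{e_1} \times \dots \times R_m^{e_m}$, which is a faithfully projective $R$-module since each $e_i \geq 1$. Then $\End_R(P) \iso M_{e_1}(R_1) \times \dots \times M_{e_m}(R_m)$ is Azumaya and Brauer-trivial, so $B := A \otimes_R \End_R(P) \iso M_{e_1}(A_1) \times \dots \times M_{e_m}(A_m)$ is an Azumaya $R$-algebra of rank $d_i^2 e_i^2 = N^2$ on each $U_i$, hence of constant rank $N^2$. Finally, taking $P_1 = P$ and $P_2 = R$ in Definition~\ref{Brauer-Def} shows that $B$ is Brauer-equivalent to $A$, so $[B] = [A] = c$ in $\Br(R)$; thus $c$ contains the Azumaya algebra $B$ of constant rank, proving the claim.

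I do not anticipate a genuine obstacle here; the only point requiring care is the finiteness of the set of rank values, which rests on the quasi-compactness of $\operatorname{Spec} R$, together with the standard dictionary between finite clopen partitions of $\operatorname{Spec} R$ and finite direct-product decompositions of $R$.
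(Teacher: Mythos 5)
Your proof is correct and takes essentially the same approach as the paper: decompose $R$ into a finite product over which $A$ has constant rank on each factor, then tensor with $\End_R(P)$ for a product of free modules chosen to equalize the ranks. The only (harmless) difference is that you use $N = \lcm(d_i)$ where the paper uses $n = \prod n_i$, and you spell out why the decomposition is finite via quasi-compactness of $\operatorname{Spec} R$.
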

\begin{proof}
Let $c =[A]$ for an Azumaya-algebra $A$ of not necessarily constant rank. We decompose $R$ into a finite product $R=R_1 \times \dotsm \times R_l$ and write $A$ as $A=A_1 \times \dotsm \times A_l$ such that $A_i$ is $R_i$-Azumaya of constant rank $n_i^2$. Let $n=n_1 \cdot \dotsm \cdot n_l$ and $P_i= R_i^{n/n_i}$, $i=1,\dotsc,l$. Then $P_1 \oplus \dotsb \oplus P_l$ is a faithfully projective $R$-module and the Azumaya $R$-algebra $A'=A \otimes \End_R(P)$ satisfies $[A']=[A]$ and has constant rank $n^2$. \par
\end{proof}

\begin{definition}
Let $(A, \sigma, f)$ be a quadratic pair over a field $F$ and let $B$ be an $F$-algebra with involution $\tau$.
We call a homomorphism $$\paircomp$$ of algebras-with-involutions a composition of type $(c,t) \in \Br(F)\times \mu_2(F)$ if $B$ is central simple over $F$ and $\tau$ is of the the first kind, with $c = [B]$ and $t=-1$ if $\tau$ is symplectic, otherwise $t=1$. \par
Let $S$ be a separable quadratic extension of $F$ and assume $B$ to be $S$-Azumaya of constant rank. We call $\paircomp$ a composition of type $(c',0) \in \Brc(S) \times \{0\}$, if $\tau$ is an involution of the second kind (with $\tau|F = \id_F$), where $c' = \Brcl{B}$. \par
In both cases we talk about compositions of the quadratic pair $(A,\sigma,f)$ and we call the couple $(c,t)$ or $(c',0)$, respectively, the type of the composition, denoted by $\type \alpha$. 
\end{definition}
The motivation for the preceding definition comes from Theorem \ref{char-th}. Recall from the introduction that involutions on Azumaya algebras with involution are adjoint to $\epsilon$-hermitian forms. Thus, we have:
\begin{proposition}
Let $(\sigma_q, f_q)$ be a quadratic pair on $A = \End_F(V)$. Scaling $q$, we may assume, that it represents $1$. Then compositions of
$(\sigma_q, f_q)$ correspond to compositions of $(V,q)$ with nonsingular $\epsilon$-hermitian spaces $\hcomp$, where $(B, \tau) \iso (\End_A(E),\sigma_h)$ 
and $h$ is $\epsilon$-hermitian with respect to some involution on $A$ which restricts to the same involution as $\tau$ on $Z(B)=Z(D)$.
\end{proposition}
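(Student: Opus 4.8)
The statement is obtained by concatenating three things already on the table: the canonical isomorphism of algebras-with-involution $(C(\End_F(V),\sigma_q,f_q),\underline{\sigma_q}) \iso (C_0(V,q),\tau_0)$ recorded just before the Structure Theorem; the generalized Skolem--Noether theorem of \cite{Aamp}, which realizes an algebra-with-involution of the relevant shape as the endomorphism algebra of an $\epsilon$-hermitian space; and Theorem \ref{char-th}. The plan is to unwind the definition of ``composition of the quadratic pair $(\sigma_q,f_q)$'' through these identifications until it becomes literally the datum of an $\epsilon$-hermitian composition, and then to check the process is reversible. Since a quadratic pair on $\End_F(V)$ determines $q$ only up to a factor in $\unit{F}$, and rescaling $q$ alters neither $\sigma_q$ nor $f_q$, we may fix a representative $q$ of that pair together with $z\in V$ satisfying $q(z)=1$; this is exactly what is needed to invoke Theorem \ref{char-th}.

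For the forward direction, let $\alpha\colon (C(\End_F(V),\sigma_q,f_q),\underline{\sigma_q})\to(B,\tau)$ be a composition of $(\sigma_q,f_q)$. By the definition of ``composition'', $(B,\tau)$ is either central simple over $F$ with $\tau$ of the first kind, or $S$-Azumaya --- hence central simple over the field $S$ --- with $\tau$ of the second kind. In either case write $B\cong\End_D(E)$ with $D$ a central simple algebra Brauer-equivalent to $B$ (so $Z(D)=Z(B)$) and $E$ a finitely generated faithful $D$-module; by the generalized Skolem--Noether theorem $\tau$ is then adjoint to a nonsingular $\epsilon$-hermitian form $h$ on $E$ with respect to an involution of $D$ inducing on $Z(D)$ the same map as $\tau$ does on $Z(B)$ --- over a field the invertible module appearing in the general statement of that theorem is trivial, so $h$ may be taken with values in $D$ itself. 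Composing $\alpha$ with the isomorphism $(B,\tau)\iso(\End_D(E),\sigma_h)$ and with the canonical isomorphism above produces precisely a homomorphism of algebras-with-involution $\alpha'\colon(C_0(V,q),\tau_0)\to(\End_D(E),\sigma_h)$, and Theorem \ref{char-th}, applied with the chosen $z$, attaches to $\alpha'$ an $\epsilon$-hermitian composition $\phi\colon(V,q)\times(E,h)\to(E,h)$, namely $\phi(x,y)=\alpha'(zx)(y)$.

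The converse runs the same steps backwards. Starting from a nonsingular $\epsilon$-hermitian space $(E,h)$ over a central simple $F$- or $S$-algebra $D$ carrying an involution that restricts on $Z(D)$ as prescribed, and from an $\epsilon$-hermitian composition $\phi$ of $(V,q)$ with $(E,h)$, Theorem \ref{char-th} returns a homomorphism $(C_0(V,q),\tau_0)\to(\End_D(E),\sigma_h)$; precomposing with the inverse of the canonical isomorphism, and observing that $(\End_D(E),\sigma_h)$ is central simple over $F$ with $\sigma_h$ of the first kind, resp.\ central simple over $S$ with $\sigma_h$ of the second kind, exhibits it as a composition of the quadratic pair $(\sigma_q,f_q)$ of the asserted type. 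Because the two passages furnished by Theorem \ref{char-th} are mutually inverse (via $\phi(x,y)=\alpha'(zx)(y)$), so are these two assignments once the realization $(B,\tau)\iso(\End_D(E),\sigma_h)$ has been fixed; this is the claimed correspondence.

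I expect the only genuinely delicate point to be bookkeeping rather than substance: the Skolem--Noether realization $(B,\tau)\iso(\End_D(E),\sigma_h)$ is not canonical --- for a fixed involution on $D$ the form $h$ is unique only up to isometry and up to a scalar fixed by that involution, and replacing the involution on $D$ also flips the sign $\epsilon$, exactly as the two descriptions $h_1,h_2$ of one involution on $M_2(Q)$ in the earlier example illustrate. So the precise statement is a bijection once one such realization has been chosen, and letting the realization vary yields the correspondence in the form stated; keeping track of these choices (and of the matching of types) is where the care is needed.
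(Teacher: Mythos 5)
Your proof is correct and takes the approach the paper intends: the paper states this Proposition without a separate proof, treating it as the immediate concatenation of the canonical isomorphism $(C(\End_F(V),\sigma_q,f_q),\ul{\sigma_q})\iso(C_0(V,q),\tau_0)$, the Skolem--Noether realization of $(B,\tau)$ as adjoint to an $\epsilon$-hermitian form over an algebra $D$ Brauer-equivalent to $B$ (the paper's ``$\End_A(E)$'' is evidently a slip for ``$\End_D(E)$'', as the closing clause $Z(B)=Z(D)$ shows), and Theorem \ref{char-th}. Your added bookkeeping---that rescaling $q$ leaves $(\sigma_q,f_q)$ unchanged, that the invertible module in the Skolem--Noether theorem is trivial over $F$ or a quadratic extension $S$, and that the realization $(B,\tau)\iso(\End_D(E),\sigma_h)$ is only determined up to scalar and choice of involution on $D$---makes explicit exactly what the paper leaves implicit.
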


Moreover, from Theorem \ref{th1} we get that quadratic compositions of spaces $(V,q)$ and $(E,p)$ correspond to compositions of the quadratic pair $(\sigma_q, f_q)$ with an orthogonal (resp.~symplectic, if $\Char F = 2$) involution on the trivial algebra $B=\End_F(E)$, assuming $\dim V \geq 6$. \par
If $(\sigma,f)$ is a quadratic pair on a non-trivial central simple algebra $A$, we may extend scalars to a splitting field $L$ to get a $\epsilon$-hermitian composition of a quadratic space (over $L$) again. If choosing $L$ large enough such that $\Brcl{B \otimes L} = 1 \in \Brc(Z(B)\otimes L)$, scalar extension yields compositions of quadratic spaces with $\epsilon$-symmetric bilinear forms if $Z(B)=F$, and with $\epsilon$-hermitian forms with respect to the standard involution of $Z(B) \otimes L$ over $L$ if $Z(B)/F$ is a separable quadratic extension, respectively.

\begin{examples}
\item If $\paircomp$ is a composition of type $(c,t)$ and $(B',\tau')$ is a central simple algebra over $F$ with involution of the first kind, then $C(A,\sigma,f) \to (B,\tau) \hookrightarrow (B\otimes B', \tau \otimes \tau')$ is a composition of $(A,\sigma,f)$ with $(B\otimes B', \tau \otimes \tau')$ of type $(c\cdot c', t \cdot t')$.
\item Let $(Q_1,\gamma_1)$ and $(Q_2,\gamma_2)$ be quaternion algebras over $F$ with standard involutions. 
There exists a canonical quadratic pair $(A,\sigma,f)$ with $A = Q \otimes Q_2$, $\sigma = \gamma_1 \otimes \gamma_2$ and its Clifford algebra is $(Q_1,\gamma_1) \times (Q_2, \gamma_2)$, see \cite[Example (8.19)]{boi}. Projection to the first factor yields a homomorphism: $(C(A,\sigma,f), \usigma) \to (Q_1,\gamma_1)$, hence a composition of $(A,\sigma,f)$ with $(Q_1,\gamma_1)$. Including $(Q_1,\gamma_1)$ in $(A,\sigma)$ gives a composition of $(A,\sigma,f)$ with $(A,\sigma)$. 
\end{examples}

\begin{remark}
For trivial algebras $A=\End_F(V)$ there exists a composition of $(A,\sigma_q,f_q)$ with $(A,\sigma_q)$ whenever $(V,\lambda \cdot q)$ is a unitary composition algebra for some $\lambda \in \unit{F}$, i.e.~a quadratic space (representing $1$) that admits a composition with itself. If $\dim V \geq 6$ or $\Char F=2$ the converse is also true, which follows from Theorem \ref{th1}. \par
It would be interesting to generalize the notion of composition of two quadratic spaces to quadratic pairs. A \emph{composition of two quadratic pairs} $(A,\sigma,f)$ and $(B,\tau,g)$ should be defined such that it induces a composition of $(A,\sigma,f)$ with $(B,\tau)$ like introduced in the present paper. For fields of characteristic $2$ the notions should show to be equivalent (presumably also if $\deg A \geq 6$, like in Theorem \ref{th1}). Also it should be stable under scalar extension, so that a composition of two quadratic pairs yields a composition of quadratic spaces over a common splitting field. Interestingly, for trivial algebras a composition $\qcomp$ induces a bilinear map $\psi \colon \Sym(\End_F(V),\sigma_q) \times \Sym(\End_F(E),\sigma_p) \to \Sym(\End_F(E),\sigma_p)$ such that $f_q(s) f_p(t) = f_p(\psi(s,t))$. It is given on generators $s=\varphi_q(x\otimes x)$, $t=\varphi_p(y\otimes y)$ as $\psi(s,t) = \varphi_p(\phi(x,y) \otimes \phi(x,y))$. Note that $\phi$ can be expressed through the composition homomorphism $\comphom$ if choosing an element $z \in V$ with $q(z)=1$.
Therefore we suggest to define a composition of two quadratic pairs $(A,\sigma,f)$ and $(B,\tau,g)$ via a bilinear map $\psi\colon \Sym(A,\sigma) \times \Sym(B,\tau) \to \Sym(B,\tau)$ subject to the condition $f(s)g(t) = g(\psi(s,t))$ and some further restrictions. More restrictions are needful since the bilinear map $\psi(s,t) = f(s)t$ should not be allowed. Unfortunately we did not manage to define $\psi$ in an explicit way nor to find suitable restrictions. \par
A \emph{composition pair} $(A,\sigma,f)$ could then be defined as a quadratic pair which admits a composition with itself. Much of the classical results follow directly from the existence of a composition $(A,\sigma,f) \to (A,\sigma)$. For example $\deg A$ must be $1,2,4$ or $8$. If $\deg A = 8$ it is necessary that the center of the (even) Clifford algebra splits and one factors is trivial (hence the other factor is isomorphic to $A$). See \cite[Corollary 2]{elduque} for the situation on trivial algebras, where those two conditions are proved to be sufficient as well.
\end{remark}

In characteristic $2$ there do not exist quadratic pairs of odd degree. In that case we consider semi-regular quadratic spaces instead of quadratic pairs. Let us introduce the following notation:
\begin{notation}
We denote by the letter $P$ a quadratic pair $(A, \sigma, f)$ or (in odd dimension, $\Char F = 2$) a semi-regular quadratic space $(V,q)$. We call
$P$ an \emph{extended quadratic pair}. Furthermore we denote by $(C(P),\usigma)$ the (even) Clifford algebra $C(A,\sigma, f)$ and $C_0(V,q)$,
respectively, with canonical involution. We write $\deg P$ for $\deg A$ and $\dim V$, respectively.
\end{notation}

\begin{remark}
For any extended quadratic pair $P$ and any given type $(c,t) \in \Br(F) \times \mu_2(F)$ with $c^2 = 1 \in \Br(F)$ there do exist compositions. A composition of such type can be constructed as follows: 
Consider the embedding $C(P)$ in $C(P) \otimes \opp{C(P)} \iso \End_Z(E) \subset \End_F(E)$. By Lemma \ref{ext} and Lemma \ref{fxfext} the involution $\usigma$ on $C(P)$ can be extended to an involution of the first kind $\rho$ on $\End_F(E)$. 
Secondly, let $D$ be central simple of Brauer class $c$. Since $c^2=1 \in \Br(F)$, there exist an involution of the first kind $\tau$ on $D$. We take the tensor product with $(D,\tau)$. The type of $\rho \otimes \tau$ may not be as required. Taking the tensor product with $M_2(F)$ equipped with the standard involution finds a remedy for that. \par
The same holds for compositions with unitary involutions: Let $S/F$ be a separable quadratic extension and $c' \in \Brc(S)$ satisfying $N_{S/F}(c')=1 \in \Br(F)$, i.e.~there exists a unitary involution $\tau'$ on $D'$. Taking the tensor product $(C(P), \usigma) \subset (\End_F(E),\rho) \subset (\End_F(E) \otimes D', \rho \otimes \tau')$ yields a composition of $P$ of type $(c',0)$.
Thus, one always gets a composition of given type. The difficulty is just to find and to reach the minimal degree.
\end{remark}

\section{Minimal Compositions}
We say, that a composition $\extpaircomp$ is \emph{minimal}, if there exists no composition $ \beta \colon (C(P), \usigma)
\longrightarrow (B', \tau')$ with $\type \alpha = \type \beta$ and $\deg B' < \deg B$. Note, that in contrast to most other works on
compositions we fix $P$ itself, not only the degree of $P$. We are interested to find a good composition for every $P$ (and every suitable type of composition). By the minimal
composition degree of given composition type $\alpha$ for $P$ we mean $\deg B'$ for any minimal composition $ \beta \colon (C(P), \usigma)
\longrightarrow (B', \tau')$ of that type. We denote the minimal composition
degree by $\mcd(P,c,t)$ for compositions of type $(c,t)$. \par
We will express the minimal composition degrees $\mcd(P, \alpha)$ with the help of a metric on the constant Brauer group. In order to introduce the metric, we need some preparations.
For an Azumaya $R$-algebra of constant rank, the minimum degree of $B$ taken over all Azumaya $R$-algebras $B$ of constant rank with $\Brcl{B}=\Brcl{A}$ is called \emph{Index} of $A$, denoted by $\ind A$. We have $\ind M_r(D) = r$ if $D$ is a division algebra with center $F$. If $A$ is Azumaya over $R=F\times F$, then $A$ is of the form $A_1\times A_2$ with $A_1$, $A_2$ central simple over $F$ and $\deg A_1=\deg A_2 = \deg A$. It is easy to see then, that $\ind A = \lcm(\ind A_1, \ind A_2)$. In both cases the index of $A$ divides the degree of $A$. \par
\begin{lemma} Let $R$ be a commutative ring and $\Brc(R)$ the constant Brauer group. The map
$$d: \Brc(R) \times \Brc(R) \rightarrow \mathbb{R}, \quad d(\Brcl{B_1},\Brcl{B_2}) = \log_2 \ind(B_1 \otimes \opp{B_2})$$ is well-defined
and satisfies the axioms of a metric. We omit the brackets and write e.g. $d(B_1,B_2)$ instead of
$d(\Brcl{B_1},\Brcl{B_2})$ for short. The above metric has the following properties: $$d(A\otimes B, A \otimes C) = d(B,C) \quad
\text{and} \quad d(\opp{B}, \opp{C}) = d(B,C) $$ for $R$-Azumaya-algebras $A, B, C$ of constant rank.
\end{lemma}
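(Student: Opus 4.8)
The plan is to reduce the whole statement to a few standard facts about Azumaya algebras of constant rank over $R$: (i) the splitting isomorphism $B \otimes \opp{B} \iso \End_R(\ul{B})$, which shows that $\opp{B}$ represents the inverse of $\Brcl{B}$ and, applied twice, that $\opp{(A \otimes B)} \iso \opp{A} \otimes \opp{B}$; (ii) multiplicativity of the degree, $\deg(A \otimes B) = \deg A \cdot \deg B$, coming from multiplicativity of the rank; and (iii) the elementary properties of the index, namely that $\ind$ depends only on the Brauer class, that $1 \leq \ind X \leq \deg X$ with $\ind X = 1$ exactly when $\Brcl{X} = \Brcl{R}$ (a rank-one Azumaya algebra being $\iso R$), and that $\ind \opp{X} = \ind X$ since $X \mapsto \opp{X}$ is a degree-preserving bijection on Brauer classes. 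For $R \iso F \times F$ these reduce, via the decomposition $A \iso A_1 \times A_2$ and $\ind A = \lcm(\ind A_1, \ind A_2)$ recalled above, to the field case; and $\ind X \leq \deg X$ is immediate from the definition of $\ind$ as a minimum over a set containing $\deg X$.

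Granting these, well-definedness is clear: $\ind$ of a constant-rank Azumaya algebra is a positive integer, so $\log_2 \ind(B_1 \otimes \opp{B_2}) \geq 0$ is a real number, and since $\ind$ is a function of the Brauer class while $\Brcl{B_1 \otimes \opp{B_2}}$ depends only on $\Brcl{B_1}$ and $\Brcl{B_2}$, the value is independent of the chosen representatives. Nonnegativity is then immediate, and $d(B_1,B_2) = 0$ iff $\ind(B_1 \otimes \opp{B_2}) = 1$ iff $B_1 \otimes \opp{B_2}$ is Brauer-trivial iff $\Brcl{B_1} = \Brcl{B_2}$. For symmetry, $B_2 \otimes \opp{B_1} \iso \opp{(B_1 \otimes \opp{B_2})}$ by (i), whence $d(B_2,B_1) = d(B_1,B_2)$ by the $\opp{(-)}$-invariance of $\ind$ in (iii).

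For the triangle inequality I would pass to minimal representatives: choose constant-rank Azumaya algebras $A_1, A_2$ with $\Brcl{A_1} = \Brcl{B_1 \otimes \opp{B_2}}$, $\Brcl{A_2} = \Brcl{B_2 \otimes \opp{B_3}}$ and $\deg A_i$ equal to the respective indices. Using $\opp{B_2} \otimes B_2 \iso \End_R(\ul{B_2})$, the algebra $A_1 \otimes A_2$ is Brauer-equivalent to $(B_1 \otimes \opp{B_2}) \otimes (B_2 \otimes \opp{B_3})$, hence to $B_1 \otimes \opp{B_3}$, so by (ii) and (iii), $\ind(B_1 \otimes \opp{B_3}) \leq \deg(A_1 \otimes A_2) = \deg A_1 \cdot \deg A_2 = \ind(B_1 \otimes \opp{B_2}) \cdot \ind(B_2 \otimes \opp{B_3})$; taking $\log_2$ gives $d(B_1,B_3) \leq d(B_1,B_2) + d(B_2,B_3)$. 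The last two identities follow by the same cancellation trick: $(A \otimes B) \otimes \opp{(A \otimes C)} \iso (A \otimes \opp{A}) \otimes (B \otimes \opp{C})$ is Brauer-equivalent to $B \otimes \opp{C}$, giving $d(A \otimes B, A \otimes C) = d(B,C)$; and $\opp{B} \otimes \opp{(\opp{C})} = \opp{B} \otimes C \iso \opp{(B \otimes \opp{C})}$ has index equal to $\ind(B \otimes \opp{C})$ by (iii), giving $d(\opp{B}, \opp{C}) = d(B,C)$.

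I do not expect a genuine obstacle here; the argument is essentially bookkeeping. The one point demanding attention is to check that every operation used — tensor products, opposites, and the passage to minimal representatives — stays within the class of constant-rank Azumaya algebras so that $\ind$ remains defined, and that the properties collected in (iii) are legitimately available over an arbitrary base ring rather than only over a field or a product of two fields.
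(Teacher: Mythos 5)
Your proof is correct and follows essentially the same route as the paper's, which cites three facts — $\Brcl{\opp{A}} = \Brcl{A}^{-1}$, $\ind A = \ind \opp{A}$, and $\ind(A_1 \otimes A_2) \leq \ind A_1 \cdot \ind A_2$ — and leaves the verification to the reader. Your triangle-inequality argument via choice of minimal-degree representatives is precisely the standard proof of that submultiplicativity, so you have simply unpacked what the paper treats as a cited fact; the remaining checks (well-definedness, symmetry, the cancellation identities) match the paper's intended argument.
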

\begin{proof}
By construction, the index of an Azumaya-algebra of constant rank only depends on its class in the constant Brauer group.
Using the properties $1)~\Brcl[-1]{A} = \Brcl{\opp{A}},\ 2)~\ind A = \ind \opp{A}$ und $3)~\ind(A_1 \otimes A_2) \leq
\ind(A_1) \ind(A_2)$ the assertions can easily be shown. 
\end{proof}

\begin{remark}
If $R=F$, the induced metric on the subgroup of classes of central simple algebras with involutions of the first kind maps to $\mathbb{N}$. \end{remark}

The relevance of the metric to solve the composition problem shows up in the following two lemmas:
\begin{lemma}
\label{dbound}
Let $C$ be $R$-Azumaya of constant rank. The minimal rank of an Azumaya $R$-algebra $A$ of constant rank of given class $\Brcl{B} \in \Brc(R)$ for which there exists a homomorphism $f \colon C \to A$ of $R$-algebras is exactly $\deg A = \deg C \cdot 2^{d(B,C)}$. 
If $R = F$ or $R = F \times F$ then the degree of any (not necessarily minimal) such $A$ is a multiple of $\deg C \cdot 2^{d(B,C)}$.
\end{lemma}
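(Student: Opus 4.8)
The plan is to detach $C$ from $A$ by means of the double centralizer theorem for Azumaya algebras, which reduces the whole statement to an index computation in the constant Brauer group. The first step is to observe that the homomorphism $f$ (which I take to be unital) is automatically injective: the two-sided ideals of the Azumaya algebra $C$ are exactly the ideals $IC$ with $I \subseteq R = Z(C)$, so $\ker f = I_0 C$ where $I_0 = \ker f \cap R$; for $r \in I_0$ we get $r \cdot 1_A = f(r \cdot 1_C) = 0$, and since $A$ is a faithful $R$-module (being Azumaya of constant rank) this forces $I_0 = 0$. Hence $f$ identifies $C$ with an Azumaya $R$-subalgebra of $A$.

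By the double centralizer theorem, $C' := C_A(C)$ is then Azumaya over $R$ and multiplication induces an isomorphism $C \otimes_R C' \iso A$. Comparing $R$-module ranks at each prime of $R$ shows that $C'$ has constant rank and that $\deg A = \deg C \cdot \deg C'$; moreover $\Brcl{A} = \Brcl{C}\,\Brcl{C'}$ in $\Brc(R)$. Using the hypothesis $\Brcl{A} = \Brcl{B}$ and the identity $\Brcl[-1]{C} = \Brcl{\opp C}$, this yields $\Brcl{C'} = \Brcl{B}\,\Brcl{\opp C} = \Brcl{B \otimes \opp C}$.

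Consequently $\deg C' \geq \ind C' = \ind(B \otimes \opp C) = 2^{d(B,C)}$, so $\deg A \geq \deg C \cdot 2^{d(B,C)}$, which is the claimed lower bound. For the matching upper bound I would run the construction backwards: by definition of the index there is an Azumaya $R$-algebra $C'$ of constant rank with $\Brcl{C'} = \Brcl{B \otimes \opp C}$ and $\deg C' = \ind(B \otimes \opp C)$; then $A := C \otimes_R C'$ is Azumaya of constant rank, admits the homomorphism $c \mapsto c \otimes 1$ from $C$, has $\Brcl{A} = \Brcl{C}\,\Brcl{B \otimes \opp C} = \Brcl{B}$, and satisfies $\deg A = \deg C \cdot 2^{d(B,C)}$. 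Hence the minimum is attained and equals $\deg C \cdot 2^{d(B,C)}$.

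Finally, the argument above shows that \emph{every} admissible $A$ has $\deg A = \deg C \cdot \deg C'$ with $\Brcl{C'} = \Brcl{B \otimes \opp C}$, so for the last assertion it suffices to check that $\ind C'$ divides $\deg C'$ when $R = F$ or $R = F \times F$. Over $F$ this is Wedderburn's theorem ($C' \iso M_r(D)$ with $D$ a division algebra gives $\deg C' = r \cdot \ind C'$), and over $F \times F$, writing $C' = C'_1 \times C'_2$ with $C'_1, C'_2$ central simple over $F$ of the common degree $\deg C'$, each $\ind C'_i$ divides $\deg C'$ and hence so does $\ind C' = \lcm(\ind C'_1, \ind C'_2)$; in both cases $\deg A$ is a multiple of $\deg C \cdot 2^{d(B,C)}$. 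I expect the only real obstacle to be getting the ``constant rank'' bookkeeping right (that $C'$ inherits it, and that indices are computed with respect to the appropriate class of representatives); I would also flag that the divisibility $\ind \mid \deg$ used in the last step is precisely what can fail over a more general base ring, which explains why that clause of the statement is confined to $R \in \{F, F \times F\}$.
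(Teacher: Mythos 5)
Your proof is correct and takes essentially the same approach as the paper: both decompose $A \iso C \otimes_R G$ with $G = C_A(f(C))$ via the double centralizer theorem, derive $\Brcl{G} = \Brcl{B \otimes \opp{C}}$, and deduce the bound from the index of $G$, with the final divisibility claim coming from $\ind \mid \deg$ over $F$ and $F \times F$. You merely spell out the injectivity of $f$ and the constant-rank bookkeeping that the paper leaves implicit.
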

\begin{proof}
For a homomorphism $f \colon C \to A$ of Azumaya algebras of constant rank we may write $A \iso C \otimes G$, where $G=C_A(f(C))$ is Azumaya of constant rank with $\Brcl{G} = \Brcl{B \otimes \opp{C}}$. 
Thus, the inequality $\deg A = \deg C \deg G \geq \deg C \ind G = \deg C \ind (B \otimes \opp{C}) = \deg C \cdot 2^{d(B,C)}$ holds. 
If $R = F$ or $R = F \times F$ then the degree of $G$ is a multiple of the index of $G$. 
Furthermore, there exists an Azumaya algebra $G$ with $2^{d(B,C)} = \deg G$ and $\Brcl{B \otimes \opp{C}} = \Brcl{G}$. Let $f \colon C \hookrightarrow A = C \otimes G$ be the
inclusion. We have $\deg A = \deg C \deg G = \deg C \cdot 2^{d(B,C)}$ and $\Brcl{A} = \Brcl{C} \Brcl{G} = \Brcl{C} \Brcl{B \otimes \opp{C}} = \Brcl{B}$.
\end{proof}

\begin{lemma}
\label{cbound}
Let $F$ be a field and $Z,S/F$ separable extensions of degree $1$ or $2$. Let $B$ and $C$ be two Azumaya algebras with center $S$ and $Z$, respectively. If there exists a homomorphism of $F$-algebras $\alpha \colon C\to B$, then the degree of $B$ is of the following form:
\begin{enumerate}
\item \label{c1} If $Z=F$ then $\deg B = n \deg C \cdot 2^{d(B,C\otimes S)}$ with $n \geq 1$.
\item \label{c2} If $Z \iso F \times F$ and $\alpha$ is not injective, then $\deg B = n \deg C \cdot 2^{d(B,C^{\pm} \otimes S)}$ with $n \geq 1$.
\item \label{c3} If $Z \iso F \times F$ and $S$ is a field, then $\deg B = \deg C \big(n_1 2^{d(B,C^{+} \otimes S)} + n_2 2^{d(B,C^{-} \otimes S)}\big)$ with $n_1+n_2 \geq 1$. Moreover, if $\alpha$ is injective and $S=F$ then $n_1, n_2 \geq 1$.
\item \label{c4} If $Z$ and $S$ are isomorphic quadratic field extensions of $F$, then $\deg B = \deg C \big(n_1 2^{d(B,C)} + 
n_2 2^{d(\inv{B},C)} \big)$ with $n_1 + n_2 \geq 1$. 
\item \label{c5} If $Z \iso F \times F \iso S$, then $\deg B = \deg C \big(n_1 2^{d(B^{+},C^{+})} + n_2 2^{d(B^{+},C^{-})}\big) = \deg C \big(m_1 2^{d(B^{-},C^{+})} + m_2 2^{d(B^{-},C^{-})}\big)$ with $n_1 + n_2 \geq 1, m_1 + m_2 \geq 1$.
\item \label{c6} If $Z$ is a quadratic field extensions of $F$ and $Z \not \iso S$, then $\deg B = n \deg C \cdot 2^{d(B\otimes Z,C \otimes S)+1}$ with $n \geq 1$.
\end{enumerate}
\end{lemma}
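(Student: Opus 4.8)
The plan is to reduce every one of the six cases to Lemma~\ref{dbound}, applied over a suitably chosen base ring, by manufacturing out of the given $F$-algebra homomorphism $\alpha\colon C\to B$ a homomorphism of Azumaya algebras \emph{over a common center}. Throughout I take homomorphisms of algebras to be unital, so $\alpha\neq 0$, and I identify $C$ with $\alpha(C)$ whenever $\alpha$ is injective. I would use, without further comment, that the quantitative (``multiple'') part of Lemma~\ref{dbound} holds over any finite product of fields, with the same proof. First I dispose of a degenerate situation: if $Z\iso F\times F$ and $\alpha$ is not injective, then $\ker\alpha$ is one of the two factors of $C=C^{+}\times C^{-}$, so $\alpha(C)$ is isomorphic to $C^{+}$ or $C^{-}$, a central simple $F$-algebra of the same degree as $C$; replacing $C$ by it reduces that case (in particular \ref{c2}, and the non-injective sub-parts of \ref{c3} and \ref{c5}) to the situation $Z=F$. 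In all other cases $\alpha$ is injective.

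The base case is \ref{c1}, $Z=F$. Since $\alpha(C)$ centralizes $S=Z(B)$, the assignment $c\otimes s\mapsto\alpha(c)s$ is an $S$-algebra homomorphism $C\otimes_{F}S\to B$, and it is injective because $C\otimes_{F}S$ is Azumaya over $S$ (and, when $S\iso F\times F$, no factor is killed, $\alpha$ being unital). As $\deg(C\otimes_{F}S)=\deg C$, Lemma~\ref{dbound} over $S$ gives $\deg B=n\deg C\cdot 2^{d(B,\,C\otimes S)}$ with $n\geq 1$, which together with the preliminary reduction settles \ref{c1} and \ref{c2}. For \ref{c3} and \ref{c5} (where $Z\iso F\times F$ and $\alpha$ is injective) the idea is an idempotent splitting: the elements $e^{\pm}=\alpha\big((1,0)\big)$ and $\alpha\big((0,1)\big)$ are nonzero orthogonal idempotents of $B$ summing to $1$ that centralize $\alpha(C)$, so $\alpha(C^{\pm})$ lands in the corner $e^{\pm}Be^{\pm}$, which has the same class as $B$ and satisfies $\deg(e^{+}Be^{+})+\deg(e^{-}Be^{-})=\deg B$. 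In \ref{c3} ($S$ a field) each corner is central simple over $S$ and I apply case~\ref{c1} to $C^{\pm}\hookrightarrow e^{\pm}Be^{\pm}$ and add degrees, noting that both summands are nonzero, hence $n_{1},n_{2}\geq 1$, when $S=F$. In \ref{c5} ($S\iso F\times F$) I first compose $\alpha$ with each projection $B\to B^{\pm}$ and run the same splitting inside the central simple $F$-algebra $B^{\pm}$, applying Lemma~\ref{dbound} over $F$; since $\deg B^{+}=\deg B^{-}=\deg B$ this yields the two displayed expressions simultaneously.

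The cases \ref{c4} and \ref{c6} ($Z$ a field, hence $\alpha$ injective) I would handle through the étale $S$-subalgebra $K=\alpha(Z)\cdot S\subseteq B$. Using $Z\not\iso S$ in \ref{c6}, respectively $\alpha(Z)\neq S$ in \ref{c4}, one gets $\alpha(Z)\cap S=F$ and hence $K\iso Z\otimes_{F}S$ as $F$-algebras. As $\alpha(C)$ centralizes both its own center $\alpha(Z)\subseteq K$ and the center $S$ of $B$, it lies in $C_{B}(K)$, which by the Centralizer Theorem is Azumaya over $K$ with $[C_{B}(K)]=[B\otimes_{S}K]$; moreover $\alpha(C)\otimes_{\alpha(Z)}K\hookrightarrow C_{B}(K)$ is an injection of Azumaya $K$-algebras of degree $\deg C$ with $[\alpha(C)\otimes_{\alpha(Z)}K]=[C\otimes_{F}S]$, under the identification $K\iso Z\otimes_{F}S=S\otimes_{F}Z$. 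In \ref{c6} one checks $\deg C_{B}(K)=\tfrac12\deg B$ (directly in each subcase $S=F$, $S$ a quadratic field distinct from $Z$, or $S\iso F\times F$), and then Lemma~\ref{dbound} over $K$ gives $\tfrac12\deg B=n\deg C\cdot 2^{d(B\otimes Z,\,C\otimes S)}$ with $n\geq 1$ --- the asserted formula. In \ref{c4}: if $\alpha(Z)=S$ then $\alpha$ is already an $S$-algebra map between central simple $S$-algebras and Lemma~\ref{dbound} over $S$ gives a single term ($n_{1}=0$ or $n_{2}=0$); if $\alpha(Z)\neq S$ then $Z\otimes_{F}Z\iso Z\times Z$ is not a field, so $K\iso S\times S$ and $C_{B}(K)=g_{1}Bg_{1}\times g_{2}Bg_{2}$ splits via the two idempotents $g_{i}$ of $K$ into central simple $S$-algebras with $\deg(g_{1}Bg_{1})+\deg(g_{2}Bg_{2})=\deg B$; the two projections of $\alpha(C)$ are copies of $C$ viewed over $S$ through the two $F$-embeddings $Z\hookrightarrow B$, i.e.\ through the identity and through $\iota$, so applying Lemma~\ref{dbound} over $S$ to each, using $d(B,\inv C)=d(\inv B,C)$, and summing yields $\deg B=\deg C\big(n_{1}2^{d(B,C)}+n_{2}2^{d(\inv B,C)}\big)$ with $n_{1},n_{2}\geq 1$.

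The routine part is the identifications of Brauer classes and degrees. The part I expect to cost the most care is the bookkeeping in \ref{c4} and \ref{c6}: matching $[C_{B}(K)]$ with the class of $B\otimes_{F}Z$ and $[\alpha(C)\otimes_{\alpha(Z)}K]$ with that of $C\otimes_{F}S$ over the one ring $K$, keeping straight which of the two embeddings $Z\hookrightarrow B$ is the untwisted one, and --- the genuine sticking point --- proving $\deg C_{B}(K)=\tfrac12\deg B$ uniformly in \ref{c6}, where the split case $S\iso F\times F$ forces one to rule out, via minimal-polynomial considerations, that $\alpha(Z)$ meets a simple factor of $B$ inside $F$. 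Everything else follows mechanically once \ref{c1} is in hand.
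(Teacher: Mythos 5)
Correct, and by a genuinely different route than the paper. The paper's proof is uniform and extrinsic: in every non-trivial case it tensors $\alpha$ with $\id_{\opp{B}}$ to obtain a map $\gamma$ into $\End_S(\ul{B})$, decomposes $\ul{B}$ under the central idempotents of $C\otimes_F\opp{B}$ (equivalently of $Z\otimes_F S$), applies Lemma~\ref{dbound} to each summand, and then uses $\dim_S\ul{B}=(\deg B)^2$ to convert module ranks into degrees --- in particular cases~\ref{c4} and \ref{c6} are run through the decomposition $C\otimes_F\opp{B}\iso C\otimes_K\opp{B}\times C\otimes_K\inv{\opp{B}}$, resp.\ through $E$ viewed as a $Z\otimes S$-space. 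Your proof stays inside $B$: for $Z\iso F\times F$ you cut $B$ (or $B^{\pm}$) into the corners determined by $\alpha(1,0)$, $\alpha(0,1)$ and add degrees, and for $Z$ a field you introduce $K=\alpha(Z)\cdot S$, identify it with $Z\otimes_F S$, and use the double centralizer theorem to get $\deg C_B(K)=\tfrac12\deg B$ and $[C_B(K)]=[B\otimes_F Z]$. The two pictures are dual ($C_B(K)$ carries the same Brauer data as the paper's image of $\gamma$), and both reduce to Lemma~\ref{dbound}; the trade-off is that your version makes degree additivity and the factor $[K:S]=2$ in \ref{c4}, \ref{c6} manifest, while the paper's sidesteps the one verification you correctly flag as delicate --- that $K\iso Z\otimes_F S$ when $S\iso F\times F$, i.e.\ that $\alpha(Z)$ does not sit inside a single factor $F$ --- because the needed idempotents live in $C\otimes\opp{B}$ automatically. (A small bonus of your argument: in case~\ref{c3} it gives $n_1,n_2\geq 1$ whenever $\alpha$ is injective, not only when $S=F$; the lemma states less than you obtain.)
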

\begin{proof}
\begin{enumerate}
\item The homomorphism $\alpha\colon C \to B$ together with the inclusion $S \hookrightarrow B$ induces a homomorphism of $S$-algebras $C\otimes S \to B$, thus the claim follows from the preceding lemma.
\item Since $\alpha \colon C^{+} \times C^{-} \to B$ is non-injective, its kernel must be one of its nontrivial ideals $C^{+} \times \{0\}$ or $\{0\} \times C^{-}$. Thus $\alpha$ factors through $C^{-}$ or $C^{+}$ and then the preceding argument shows the claim.
\item The homomorphism $C \otimes S \to B$ from \ref{c1}.~together with the identity on $\opp{B}$ induces a $S$-linear homomorphism $\gamma \colon C^{+} \otimes \opp{B} \times C^{-} \otimes \opp{B} \to B \otimes_S \opp{B} \iso \End_S(E)$, where $E$ is the underlying $S$-vector space of $B$. Let $E_1=\gamma (1,0) E, E_2 = \gamma(0,1) E$. Then $\gamma$ restricts to homomorphisms $C^{+} \otimes \opp{B} \to \End_S(E_1)$ and $C^{-} \otimes \opp{B} \to \End_S(E_2)$. Observe that $\dim E_1, \dim E_2 > 0$ if $\alpha$ is injective and $S=F$ (and therewith $\gamma$ is injective). Since $E = E_1 \oplus E_2$, in particular $\dim E = \dim E_1 + \dim E_2$, the claim follows with \ref{c1}.
\item As above we get a homomorphism $C \otimes_F \opp{B} \to \End_S(E)$. Let $K=Z \iso S$. The map $C \otimes_F \opp{B} \to C\otimes_K \opp{B} \times C \otimes_K \inv{ \opp{B} }$ given by $c \otimes \opp{b} \mapsto (c \otimes_K \opp{b}, c \otimes_K  \inv{\opp{b}})$ is an isomorphism  $F$-algebras, yielding a homomorphism $\gamma \colon C\otimes_K \opp{B} \times C \otimes_K \inv{\opp{B}} \to \End_S(E)$, which is $S$-linear, if we view $E$ as a $S$-vector space through $\gamma$. Now the claim follows from the argument in \ref{c3}.
\item The homomorphism $C \to B = B^{+}\times B^{-}$ yields homomorphisms $C \to B^{\pm}$. The claim follows from the argument in \ref{c3}.~and the equation $\deg B = \deg B_1 = \deg B_2$.
\item We distinguish the cases of $S$ being a field and of $S \iso F \times F$. In the first case, since $Z \not \iso S$ are both quadratic extensions, the tensor product $Z \otimes S$ is a field as well. The homomorphism $\gamma \colon C \otimes \opp{B} \to \End_S(E)$ as in \ref{c3}.~takes values in $\End_{Z\otimes S} E$ and is $Z\otimes S$-linear if viewing $E$ as a $Z\otimes S$-vector space through $\gamma$. Hence $(\deg B)^2 = \dim_S E = 2 \dim_{Z \otimes S} E = 2 \deg C \deg B \cdot 2^{d(C\otimes \opp{B},1)}$. By the isomorphism $(C \otimes S) \otimes_{Z\otimes S} \opp{(B \otimes Z)} \iso (C \otimes S) \otimes_{Z\otimes S} (Z \otimes \opp{B}) \iso C \otimes \opp{B}$, the claim follows. In the second case, there exist homomorphisms $C \to B^{\pm} $. As above we conclude $\deg B^{\pm} = 2 n_{\pm} \deg C 2^{d(B^{\pm}\otimes Z,C)}$ for some $n_{\pm}\geq 1$, hence $\deg B = 2 n \deg C \lcm \left(2^{d(B^{+}\otimes Z,C)}, 2^{d(B^{-}\otimes Z,C)} \right) = n \deg C \cdot 2^{d(B\otimes Z,C \otimes S)+1}$, where $n \geq 1$.
\end{enumerate}
\end{proof}

\begin{theorem}[Degree of minimal compositions]
\label{main-th}
Let $P$ be an extended quadratic pair of degree $n>1$. Let $C = C(P)$ the associated Clifford algebra and $Z=Z(C)$ be its center. 
\begin{enumerate}
\item Compositions with unitary involutions: \par 
Let $S/F$ be a separable quadratic extension and let $c' \in \Brc(S)$ with $N_{S/F}(c')=1$ in $\Br(F)$. The degrees of minimal compositions are given as follows:
	\begin{enumerate}
		\item $n=2k+1$:
			\label{m1b} We have $\mcd(P, c',0) = 2^{k+d(c',C\otimes S)}$.
			For any composition $\extpaircomp$ of type $(c',0)$, the degree of $B$ is a multiple of $\mcd(P,c,t)$.
		\item $n=4k$:
			\label{m2b} If $Z \iso F \times F$, $C \iso C^{+} \times C^{-}$ then $\mcd(P,c',0) = 2^{2k-1+\min\left\{d(c',C^{+}\otimes S), d(c',C^{-}\otimes S)\right\}}$.\par
				If $Z$ is a field and $Z \not \iso S$ then $\mcd(P,\Brcl{D'},0) = 2^{2k+d(D'\otimes Z, C \otimes S)}$ and in general a multiple of $\mcd(P,\Brcl{D'},0)$.
		\item $n=4k+2$: 
			\label{m3b} If $Z \iso S$ then $\mcd(P,c',0) = 2^{2k + \min \left\{d(c',C),d(c',\opp{C}) \right\}}$. \par
			If $Z$ is a field and $Z \not \iso S$ then $\mcd(P,\Brcl{D'},0) = 2^{2k+1+d(D'\otimes Z, C \otimes S)}$ and in general a multiple of $\mcd(P,\Brcl{D'},0)$. \par
			%
	\end{enumerate}
\item Compositions with involutions of the first kind: \par
	Let $c \in \Br(F)$ with $c^2 = 1$ in $\Br(F)$, let $t \in \mu_2(F)$ and let $\epsilon = 0$ if the canonical involution of $C$ is of type $t$, 	otherwise $\epsilon =1$. Then the degrees of minimal compositions are given as
	follows:
	\begin{enumerate}
		\item $n = 2k+1$: 
			\label{m1a} We have $\mcd(P, c, t) = 2^{k + d(c,C)+ \delta}$, where $\delta = 1$ if $[C]=c$ and $\epsilon =1$,
			otherwise $\delta = 0$.
			For any composition $\extpaircomp$ of type $(c, t)$, the degree of $B$ is a multiple of $\mcd(P,c,t)$.
		\item $n=4k$:  
			\label{m2a} If $Z$ is a field, then $\mcd(P,[D],t) = 2^{2k+d(D \otimes Z, C)+ \delta}$ 
			and in general a multiple of the minimal degree; \\
			if $Z \iso F \times F$, $C \iso C^{+} \times C^{-}$, then the minimal composition degree is 
			$\mcd(P,c,t) = 2^{2k-1 + \min \left\{d(c, C^{+}), d(c,C^{-}) \right\} + \delta}$, 
			where in the two cases $\delta = 1$ if $[C] = [D\otimes Z]$ and $([C^{+}]=c~\text{or}~[C^{-}]=c)$, respectively, and
			$\epsilon = 1$, otherwise $\delta = 0$.
		\item $n=4k+2$:
			\label{m3a} We have $\mcd(P,[D],t) = 2^{2k+1+d(D \otimes Z, C)}$. In general $\deg B = n \cdot 2^{2k+d(D \otimes Z,
			C)}$ with $n\geq 2$ and $n$ even if $Z$ is a field.
	\end{enumerate}
\end{enumerate}
\end{theorem}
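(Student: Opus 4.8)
The plan is to treat all cases of Theorem~\ref{main-th} in the same way: reduce the computation of $\mcd$ to the ``algebra-only'' statement about homomorphisms out of $C = C(P)$ furnished by Lemmas~\ref{dbound} and~\ref{cbound}, and then separately pay for the constraint that $\usigma$ must extend to an involution of the prescribed kind and type. First I would collect the structural input: by Theorem~\ref{structure-th}, $C$ is $F$-Azumaya of degree $2^{k}$ when $n = 2k+1$, $Z$-Azumaya of degree $2^{2k-1}$ with $Z = Z(C)$ separable quadratic when $n = 4k$, and $Z$-Azumaya of degree $2^{2k}$ when $n = 4k+2$; by the following theorem on the type of $\usigma$, the canonical involution is of the first kind for $n = 4k$ (orthogonal or symplectic according to $k$ and $\Char F$) and unitary for $n = 4k+2$. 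I would also record the two-torsion facts: $[C]^2 = 1$ in $\Br(F)$ when $n$ is odd, $[A]^2 = 1$ always, the relations $[C^+][C^-][A] = 1$ (resp.\ $[C^+][C^-] = 1$) in the split-center case, and, via the projection formula, $N_{S/F}(\Brcl{C \otimes S}) = [C]^2 = 1$ for the unitary problems.

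For the \textbf{lower bounds}, given a composition $\alpha \colon (C,\usigma) \to (B,\tau)$ one forgets $\tau$ and applies the clause of Lemma~\ref{cbound} (or of Lemma~\ref{dbound}, when $Z = Z(B) = F$) that matches the two centers: \ref{c1} for $Z = F$; \ref{c2} and~\ref{c3} for $Z \iso F \times F$ with the target center a field; \ref{c4} when $Z$ and the target center are isomorphic quadratic fields; \ref{c5} when both are split; \ref{c6} when $Z$ is a quadratic field not isomorphic to the target center. This already exhibits $\deg B$ as a positive-integer multiple of the claimed $\mcd$ up to the factor $2^\delta$, and in the split-center cases as a positive combination $\deg C\,(n_1 2^{d(\cdot,C^+)} + n_2 2^{d(\cdot,C^-)})$ whose minimum over $n_1 + n_2 \geq 1$ is $\deg C \cdot 2^{\min\{d(\cdot,C^+),\,d(\cdot,C^-)\}}$, attained exactly when $\alpha$ factors through one of the two factors. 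The $\delta$-refinement comes from writing $B \iso \alpha(C) \otimes G$ with $G = C_B(\alpha(C))$: the involution induced on $G$ has type equal to the product of $t$ with the type of $\usigma$, so if $\epsilon = 1$ it is symplectic and $\deg G$ is even; combined with the cases in which the target class forces $[G] = 1$, this yields the extra factor $2^\delta$.

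For the matching \textbf{constructions}, pick $G_0$ of minimal degree in the Brauer class that must be adjoined to $\Brcl{\alpha(C)}$ to reach the target class (a division algebra over $F$, an $S$-Azumaya algebra over $S$, or the base ring when that class is trivial), form $B$ suitably from $\alpha(C)$ and $G_0$ as in the proofs of Lemmas~\ref{dbound} and~\ref{cbound}, and take $\alpha$ to be the inclusion. The key point to check is that $G_0$ admits an involution of the needed kind: $[G_0]^2 = 1$ for the first-kind problems, from $c^2 = 1$ together with the two-torsion facts above; $N_{S/F}([G_0]) = 1$ for the unitary problems, from $N_{S/F}(c') = 1$ and $N_{S/F}(\Brcl{C \otimes S}) = 1$. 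Then $\usigma$ extends to $\usigma \otimes \tau_{G_0}$, whose type is the product of the two types; since for $\deg G_0$ even both flavours of $\tau_{G_0}$ are available, one reaches type $t$, while if $\deg G_0 = 1$ and the flavours clash (precisely the cases with $\delta = 1$) one passes to $B \otimes M_2(F)$ with the symplectic involution on $M_2(F)$, paying exactly $2^\delta$. In the split-center cases this construction is run with the factor $C^+$ or $C^-$ realizing the minimum of $d(c, C^\pm)$, which is legitimate because a homomorphism out of $C^+ \times C^-$ not factoring through a factor is injective, hence strictly more expensive by clauses~\ref{c3} and~\ref{c5}.

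The \textbf{main obstacle} is the case bookkeeping together with pinning down $\delta$ exactly: one must match the residue of $n$ modulo $8$ and $\Char F$ against the type of $\usigma$, distinguish whether $Z$ is a field or split and whether it is isomorphic to the target center, and then verify that the $2^\delta$ is both forced in the lower bound and achieved without loss by the construction. The remaining points, namely that $G_0$ of the stated index exists and that the resulting $\alpha$ really is a homomorphism of algebras-with-involution of the prescribed type, are routine once the structural data are assembled.
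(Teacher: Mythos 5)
Your overall strategy matches the paper's: obtain lower bounds by stripping involutions and applying Lemmas~\ref{dbound} and~\ref{cbound} clause-by-clause according to how the centers $Z = Z(C)$ and $Z(B)$ compare, derive the $2^\delta$ correction from the parity of $\deg C_B(\alpha(C))$, and then match the bounds by explicit constructions plus extension of involutions. The centralizer reformulation of the $\delta$-argument (type of $\tau|_G$ forced by $t$ and the type of $\usigma$, hence $\deg G$ even) is equivalent to the paper's invocation of Lemma~\ref{ext} and is fine.

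The genuine gap is in the construction step. You write ``form $B$ suitably from $\alpha(C)$ and $G_0$ and take $\alpha$ to be the inclusion. Then $\usigma$ extends to $\usigma \otimes \tau_{G_0}$.'' This tensor-product picture is only literally available when $C$ and $B$ have the \emph{same} center, i.e.\ in case~\ref{m1a} ($Z = F = Z(B)$) and the $Z\iso F\times F$ or $Z \iso S$ branches after projecting or re-twisting. In most branches of the theorem the centers differ: $Z$ is a quadratic field and $Z(B)=F$ (cases~\ref{m2a} with $Z$ a field, and~\ref{m3a}), or $Z$ and $S$ are non-isomorphic quadratic extensions (the second halves of~\ref{m2b} and~\ref{m3b}). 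There is then no decomposition $B \iso C \otimes G_0$ compatible with the inclusion, so ``$\usigma\otimes\tau_{G_0}$'' is not defined, and the target degree picks up an extra factor of $2$ coming from the index $[B:\tilde B]$ where $\tilde B$ is the $Z$-centralizer. The paper handles this by first building a $Z\otimes Z(B)$-Azumaya intermediate $\tilde B$ via Lemma~\ref{dbound}, extending $\usigma$ to $\tilde B$, and then extending once more from $\tilde B$ to $B$ via Lemma~\ref{ext} (and, when $Z\iso F\times F$ or $S\iso F\times F$, via the explicit constructions of Lemma~\ref{fxfext}, since Lemma~\ref{ext} is stated only over a field). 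Your sketch omits this two-stage extension entirely; the phrase ``as in the proofs of Lemmas~\ref{dbound} and~\ref{cbound}'' does not supply it, because Lemma~\ref{cbound} furnishes only lower bounds, not constructions. Without this, the upper bounds in the mixed-center cases --- which make up the majority of the theorem --- are not established.

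A second, smaller omission: in case~\ref{m3b} with $Z \iso S$ a field, the two-sided minimum $\min\{d(c',C), d(c',\opp{C})\}$ does not come from a split center $C^+\times C^-$ (there is none) but from the choice of whether $\alpha$ is $S$-linear or $\iota$-semilinear, equivalently whether one maps out of $C$ or out of $\inv{C}\iso\opp{C}$. Your proposal only discusses the $\min$ in ``split-center cases,'' so this branch is not actually covered by the argument as you have laid it out, even though Lemma~\ref{cbound}\ref{c4} is the right tool.
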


The proof of Theorem (\ref{main-th}) consists of 2 parts: existence and minimality. For both in addition to Lemma (\ref{cbound}) the following Lemma about the extension of involutions is needed. It can be found in \cite[p.~52]{boi} and is reproduced here, as far as necessary:

\begin{lemma}
\label{ext}
Let $B$ be a simple sub-algebra of a central simple algebra $A$ over a field $K$. Suppose, that $A$ and $B$ have involutions $\sigma$ and $\tau$ respectively with the same restriction to $K$. Then $A$ has an involution $\sigma'$ whose restriction to $B$ is $\tau$. \par
If $\sigma$ is of the first kind, the types $\sigma'$ and $\tau$ are related as follows:
\begin{itemize}
\item If $\Char K \neq 2$, then $\sigma'$ can be arbitrarily chosen of orthogonal or symplectic type, except if $\tau$ is of the first kind and $\deg C_A(B)$ is odd. In that case, every extension $\sigma'$ of $\tau$ is of the same type as $\tau$.
\item Suppose $\Char K = 2$: If $\tau$ is symplectic or unitary, then every extension $\sigma'$ of $\tau$ is symplectic.
\end{itemize}
\end{lemma}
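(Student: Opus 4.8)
The strategy is to first manufacture one extension of $\tau$ to $A$ by a Skolem--Noether argument, then to describe all extensions and read off the type from the centralizer $C_A(B)$.

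\emph{Existence.} Since $\sigma$ and $\tau$ reverse products and agree on $K$, the maps $\opp{B}\to A$ given by $\opp{b}\mapsto\sigma(b)$ and $\opp{b}\mapsto\tau(b)$ are homomorphisms of the simple algebra $\opp{B}$ into $A$; by the Skolem--Noether theorem (applied first to $Z(B)$ and then to $B$ over $Z(B)$) there is $u\in\unit{A}$ with $u\,\sigma(b)\,u^{-1}=\tau(b)$ for all $b\in B$. Then $\sigma'=\mathrm{Int}(u)\circ\sigma$ restricts to $\tau$ on $B$, and a short computation gives $(\sigma')^{2}=\mathrm{Int}(v)$ with $v=u\,\sigma(u)^{-1}$; since $(\sigma')^{2}$ is the identity on $B$, we get $v\in\unit{C_A(B)}$. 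Replacing $u$ by $u\,\sigma(d)$ with $d\in\unit{C_A(B)}$ leaves $\sigma'|_B$ unchanged and replaces $v$ by $\theta(d)\,v\,d^{-1}$, where $\theta\colon c\mapsto u\,\sigma(c)\,u^{-1}$ is an anti-automorphism of $C_A(B)$ with $\theta^{2}=\mathrm{Int}(v)$. The crucial point — and the step I expect to be the main obstacle — is that $d$ can be chosen so that $\theta(d)\,v\,d^{-1}\in\unit{K}$: this is the standard reduction of an anti-automorphism whose square is inner to a genuine involution, carried out inside $C_A(B)$ by a Hilbert-90 / reduced-norm computation. With $v\in\unit{K}$, $\sigma'$ is an involution extending $\tau$.

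\emph{All extensions; the case $\Char K\neq 2$.} If $\sigma''$ is any involution of $A$ with $\sigma''|_B=\tau$, then $\sigma''\circ\sigma'$ is an automorphism of $A$ fixing $B$ pointwise, hence $\sigma''=\mathrm{Int}(s)\circ\sigma'$ for some $s\in\unit{C_A(B)}$, and being an involution forces $\sigma'(s)=\pm s$; conversely every such $s$ yields an extension. Here $\theta:=\sigma'|_{C_A(B)}$ is an involution of $C_A(B)$ restricting to $\tau$ on $Z(C_A(B))=Z(B)$. After extending scalars to a splitting field, modifying the Gram form of $\sigma'$ by a symmetric unit preserves its symmetry class while modifying it by a skew unit reverses it; hence $\mathrm{Int}(s)\circ\sigma'$ has the same type as $\sigma'$ when $\sigma'(s)=s$ and the opposite type when $\sigma'(s)=-s$. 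Thus both types occur for $\tau$ exactly when $\Skew(C_A(B),\theta)$ contains a unit. If $\tau$ is of the second kind, $\theta$ is of the second kind and $Z(B)$ already contains a skew unit, so no exception arises; if $\tau$ is of the first kind, $\theta$ is too, and for $\deg C_A(B)$ even one finds invertible skew elements (e.g.\ an invertible alternating matrix, or the symplectic Gram element), so again both types occur. For $\deg C_A(B)$ odd, however, $\theta$ is necessarily orthogonal, and over a splitting field its skew elements become odd-sized skew-symmetric matrices, which are all singular; hence no type change is possible, and moreover $\sigma'$ itself carries the type of $\tau$, since over a splitting field its Gram form is an orthogonal sum of tensor products of the Gram form of $\tau$ with the symmetric form attached to the orthogonal $\theta$, which does not alter the symmetry class.

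\emph{The case $\Char K=2$.} Here one uses the standard characterization that a first-kind involution $\rho$ of a central simple algebra $E$ is symplectic if and only if $1\in\Alt(E,\rho)$. If $\tau$ is symplectic, then $1=b-\tau(b)=b-\sigma'(b)$ for some $b\in B$, so $1\in\Alt(A,\sigma')$ and $\sigma'$ is symplectic. If $\tau$ is unitary, write the separable quadratic extension $Z(B)/Z(B)^{\tau}$ in Artin--Schreier form $Z(B)=Z(B)^{\tau}(w)$ with $\tau(w)=w+1$; then $1=w-\tau(w)\in\Alt(B,\tau)\subseteq\Alt(A,\sigma')$, so $\sigma'$ is symplectic again. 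Since this applies to every extension, every extension of such a $\tau$ is symplectic.
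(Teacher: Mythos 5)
The paper does not prove Lemma~\ref{ext}: it explicitly imports it from \cite[p.~52]{boi} (``can be found in \dots and is reproduced here, as far as necessary''). There is therefore no in-paper proof to compare with, and your attempt has to be judged on its own. Your overall strategy --- use Skolem--Noether to manufacture an anti-automorphism $\sigma'=\mathrm{Int}(u)\circ\sigma$ extending $\tau$, correct it to an involution by a unit in $C_A(B)$, then classify all extensions as $\mathrm{Int}(s)\circ\sigma'$ with $\sigma'(s)=\pm s$, $s\in C_A(B)^\times$, and read off the type from the existence of skew units in $C_A(B)$ --- is a sound and standard route, and in particular your characteristic-$2$ argument via the criterion ``$\sigma'$ symplectic $\Leftrightarrow 1\in\Alt(A,\sigma')$'' is clean and complete. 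Your computation that $(\sigma')^2=\mathrm{Int}(v)$ with $v=u\sigma(u)^{-1}\in C_A(B)^\times$, and that changing $u$ to $u\sigma(d)$ replaces $v$ by $\theta(d)vd^{-1}$, is also correct.

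There are, however, two places where a real argument is being replaced by a label. First, the step you yourself flag as ``the crucial point'' --- finding $d\in\unit{C_A(B)}$ with $\theta(d)vd^{-1}\in\unit{K}$, so that $\sigma'$ becomes an honest involution --- is the entire content of the existence half of the lemma, and calling it ``a standard reduction \dots by a Hilbert-90 / reduced-norm computation'' does not prove it. It is a classical theorem (going back to Albert; for unitary $\tau$ it is not a one-line Hilbert 90 argument, and one must keep track of the constraint $w\sigma'(w)\in\unit{K}$, split into first- and second-kind cases, and handle characteristic $2$ separately), so this needs either a genuine computation or a precise citation. Second, for $\Char K\neq2$ and $\tau$ of the first kind with $\deg C_A(B)$ even, you assert that $\Skew\bigl(C_A(B),\theta\bigr)$ contains a unit, but the parenthetical justification (``an invertible alternating matrix, or the symplectic Gram element'') only covers the split case; over a non-split $C_A(B)$ what is really being used is that any central simple algebra of even degree carrying a first-kind involution carries involutions of both types (\cite[Cor.~2.8]{boi}), and that should be invoked rather than gestured at. With those two points filled in (or replaced by citations), the proof is correct; the classification of all extensions, the ``type flips under a skew unit'' computation over a splitting field, the observation that for $\deg C_A(B)$ odd $\theta$ is necessarily orthogonal so no skew unit exists, and the tensor-type argument $\sigma'=\tau\otimes\theta$ (better stated via $A\cong B\otimes_K C_A(B)$ when $Z(B)=K$) are all fine.
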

For unitary involutions on Azumaya-algebras over $F \times F$ we use the following construction:
\begin{lemma}
\label{fxfext}
Let $R = F \times F$.
\begin{enumerate}
\item Let $B$ be $R$-Azumaya and sub-algebra of an Azumaya $R$-algebra $A$. Assume, that $A$ and $B$ possess unitary involutions $\sigma$ and $\tau$, respectively. Then $A$ has an involution $\sigma'$, which restricts to $\tau$ on $B$. 
\item Let $D$ be a central simple division-algebra with an involution of the first kind $d \mapsto \bar{d}$ and let $B$ be $R$-Azumaya of the Form $B = \End_{D \otimes R}(E)$ for a faithfully projective $D \otimes R$-module $E$. Let $\tau$ be a unitary involution on $B$. Then there exists an involution $\sigma'$ of the first kind on $\End_{D}(E) \supset \End_{D \otimes R}(E)$, which extends $\tau$ and $\sigma'$ can be arbitrarily chosen of orthogonal or symplectic type if $\Char F \neq 2$ and of symplectic type if $\Char F = 2$, respectively.
\end{enumerate}
\end{lemma}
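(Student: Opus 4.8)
I would treat the two parts separately, with part~(1) resting on Skolem--Noether and part~(2) on a hyperbolic hermitian construction.

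For part~(1): write $R = F \times F$ with primitive idempotents $e_1, e_2$ and decompose $A = A_1 \times A_2$, $B = B_1 \times B_2$, where $A_i = e_i A$ and $B_i = e_i B$ are central simple over $F$ and $B_i \subseteq A_i$. A unitary involution on $A_1 \times A_2$ induces the non-trivial $F$-automorphism of $Z(A) = F e_1 \oplus F e_2$, hence interchanges $e_1$ and $e_2$; therefore $\sigma$ has the form $\sigma(a_1, a_2) = (\phi^{-1}(a_2), \phi(a_1))$ for an $F$-algebra anti-isomorphism $\phi \colon A_1 \xrightarrow{\iso} A_2$, and similarly $\tau(b_1, b_2) = (\psi^{-1}(b_2), \psi(b_1))$ for an anti-isomorphism $\psi \colon B_1 \xrightarrow{\iso} B_2$. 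I would then compare the two $F$-algebra embeddings of the simple algebra $B_1$ into the central simple algebra $A_1$, namely the inclusion and $\phi^{-1} \circ \psi$: by Skolem--Noether they are conjugate, so $\phi^{-1}(\psi(b)) = u b u^{-1}$ for all $b \in B_1$ and some $u \in \unit{A_1}$. Then $\Phi(a) := \phi(u a u^{-1})$ is an anti-isomorphism $A_1 \to A_2$ with $\Phi|_{B_1} = \psi$, and $\sigma'(a_1,a_2) := (\Phi^{-1}(a_2), \Phi(a_1))$ is an involution of $A$ restricting to $\tau$.

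For part~(2): $D \otimes R \iso D \times D$, so $E \iso E_1 \times E_2$ with each $E_i$ a non-zero right $D$-module, and $B = \End_D(E_1) \times \End_D(E_2)$ sits inside $A := \End_D(E)$ (where $E = E_1 \oplus E_2$) as the block-diagonal endomorphisms; here $A$ is central simple over $F$, and since $\End_D(E_1)$ and $\End_D(E_2)$ are anti-isomorphic we have $\dim_D E_1 = \dim_D E_2$, so $\deg A$ is even. As in part~(1), $\tau$ corresponds to an anti-isomorphism $\psi \colon \End_D(E_1) \xrightarrow{\iso} \End_D(E_2)$. The crucial input is a Skolem--Noether/Morita statement: $\psi$ is the adjunction map of a nonsingular sesquilinear form $h \colon E_1 \times E_2 \to D$ relative to the given first-kind involution $d \mapsto \bar d$ of $D$, i.e.\ $h(g(x),y) = h(x,\psi(g)(y))$, with $h$ unique up to a factor in $\unit{F}$; this can be read off from the classification of such forms in \cite{boi}. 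Fixing such an $h$ and a sign $\epsilon = \pm 1$, I would set
$$\tilde h(x,y) = h(x_1, y_2) + \epsilon\, \overline{h(y_1, x_2)}, \qquad x = x_1 + x_2,\ y = y_1 + y_2,$$
and verify the routine facts that $\tilde h$ is a nonsingular $\epsilon$-hermitian form on the $D$-module $E$ for which $E_1, E_2$ are totally isotropic, that its adjoint $\sigma_{\tilde h}$ is an involution of the first kind on $A$, and, by a one-line adjunction computation, that $\sigma_{\tilde h}(g_1, g_2) = (\psi^{-1}(g_2), \psi(g_1))$ on block-diagonal elements, i.e.\ $\sigma_{\tilde h}|_B = \tau$. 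So $\sigma' := \sigma_{\tilde h}$ works.

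It remains to control the type. The $\unit{F}$-ambiguity in $h$ does not affect the type of $\sigma_{\tilde h}$, but switching $\epsilon$ to $-\epsilon$ does: after extending scalars to a splitting field of $D$, the form $\tilde h$ becomes a nondegenerate $\pm\epsilon$-symmetric bilinear form, whose adjoint involution is orthogonal or symplectic according to the sign. Hence for $\Char F \neq 2$ both types occur. For $\Char F = 2$ one has $\epsilon = 1$ and $\tilde h(x,x) = h(x_1,x_2) + \overline{h(x_1,x_2)} \in \{\, d + \bar d \mid d \in D \,\}$ for every $x$, so $\tilde h$ is alternating and $\sigma'$ is symplectic, consistently with the characteristic-$2$ clause of Lemma~\ref{ext} (recall $\tau$ is unitary). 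I expect the step needing the most care to be the Skolem--Noether/Morita identification of $\psi$ with the adjunction form $h$ in part~(2), together with tracking the $\unit{F}$-ambiguity and doing the type bookkeeping over a splitting field of $D$; the conjugacy argument in part~(1), the nonsingularity and $\epsilon$-hermitian symmetry of $\tilde h$, and the restriction computation $\sigma_{\tilde h}|_B = \tau$ are all straightforward.
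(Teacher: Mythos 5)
Your proof is correct, but both parts take a genuinely different route from the paper's. For part~(1) the paper works ``globally'' over $R$: it writes $A \cong B \otimes_R C$ with $C = C_A(B)$ $R$-Azumaya, observes that $C$ inherits a unitary involution $\rho$ (because the anti-isomorphisms $A_1 \cong \opp{A_2}$ and $B_1 \cong \opp{B_2}$ force $C_1 \cong \opp{C_2}$), and then simply puts $\sigma' = \tau \otimes \rho$. You instead split $R = F \times F$ and run Skolem--Noether on the two embeddings $B_1 \hookrightarrow A_1$ (inclusion and $\phi^{-1}\circ\psi$) to conjugate $\phi$ into an anti-isomorphism $\Phi$ extending $\psi$. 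Both are sound; the paper's version is shorter but leaves the unitary involution on $C$ implicit, while your version makes all the Skolem--Noether data explicit. For part~(2) the paper picks an explicit matrix model: it identifies $(\End_{D\otimes R}(E),\tau)$ with $(M_k(D)\times M_k(D),\rho)$ for $\rho(m_1,m_2)=(m_2^\ast,m_1^\ast)$ with $(d_{ij})^\ast = (\overline{d_{ij}})^t$, embeds this into $M_{2k}(D)=M_2(M_k(D))$, and takes $\sigma' = \ast\otimes\sigma_\pm$ where $\sigma_\pm$ are the standard orthogonal/symplectic involutions on $M_2(F)$; the restriction to the block diagonal is $\rho$ by inspection, and the type is adjusted by choosing $\sigma_+$ or $\sigma_-$. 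Your hyperbolic construction $\tilde h(x,y)=h(x_1,y_2)+\epsilon\,\overline{h(y_1,x_2)}$ is the coordinate-free version of the same thing: the paper's explicit normalization of $\tau$ to $\rho$ corresponds precisely to your choice of a sesquilinear adjunction form $h$ for $\psi$, and your sign $\epsilon$ plays the role of the paper's choice between $\sigma_+$ and $\sigma_-$. Your version makes explicit two facts the paper glosses over, namely that $\dim_D E_1 = \dim_D E_2$ (so $\deg \End_D(E) = 2k$) and why $\tau$ may be brought into the normal form $\rho$; what the paper's matrix presentation buys in return is that the claim $\sigma'|_B=\tau$ becomes a one-line block computation rather than an adjunction argument, and the type bookkeeping is visibly reduced to the type of $\sigma_\pm\otimes\ast$.
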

\begin{proof}
\begin{enumerate}
\item Because of the unitary involutions $A$ and $B$ have constant rank. There exists a Azumaya $R$-algebra $C$ of constant rank and unitary involution $\rho$ with $A = B \otimes C$. The involution $\sigma' = \tau \otimes \rho$ extends $\tau$ to $A$.
\item Let $k=\deg B$. Let $\ast$ be the involution on $M_k(D)$ given by $(d_{ij})^{\ast} = (\overline{d_{ij}})^t$. We identify $\End_{D}(E)$ with $M_{2k}(D)$ and $(\End_{D \otimes R}(E), \tau)$ with $(M_k(D) \times M_k(D), \rho)$, $\rho(m_1,m_2) = (m_2^{\ast}, m_1^{\ast})$. Let $\sigma_{\pm}$ be the involution on $M_2(F)$ defined by 
$\sigma_{\pm} \left(\begin{array}{cc}
a & b \\
c & d
\end{array}
\right) =
\left(\begin{array}{cc}
d & \pm b \\
\pm c & a
\end{array}
\right).$
The involution $\sigma_{-}$ is symplectic, $\sigma_{+}$ is orthogonal if $\Char F \neq 2$. 
Define $\sigma'$ as the tensor product $\ast \otimes \sigma_{\pm}$ on $M_{2k}(D) = M_2(M_k(D))$, where $\sigma_{+}$ or $\sigma_{-}$ are chosen appropriately such that $\sigma'$ has the required type. By construction $\sigma'|_{M_k(D) \times M_k(D)} = \rho$.
\end{enumerate}
\end{proof}

\begin{proof}[Construction of Compositions]
For an extended quadratic pair $P$ we construct compositions with involutions of the first kind, i.e. of type $(P,c,t)$ with $c \in \Br(F)$, $t \in \mu_2(F)$ and compositions with unitary involutions, i.e. of type $(P,c',0)$ with $c' \in \Brc(S)$ for some separable quadratic extension $S/F$.
\begin{enumerate}
\item Compositions with unitary involutions:
\begin{enumerate}
\item $\deg P=2k+1$: There exists a homomorphism $C(P) \otimes S \to B$, where $B$ is of Brauer class $c'$ and degree $2^{k+d(c',C(P)\otimes S)}$. Since $B$ has a unitary involution and the involution $\usigma \otimes \iota$ on $C(P) \otimes S$ extending $\usigma$ is unitary, there exists a unitary involution $\tau$ on $B$ which extends the transport of $\usigma$. \par
\item $\deg P = 4k$: The involution on $C(P)$ restricts to the identity on $Z=Z(C(P))$. First consider the case $Z \iso F \times F$. The projections onto $C^{+}$ and $C^{-}$ yield two homomorphisms of algebras-with-involutions $(C(P),\usigma) \to (C^{\pm},\usigma^{\pm})$. Proceeding as above yields compositions of degree $2^{2k-1+d(c',C^{\pm}\otimes S)}$. The composition with smaller degree gives the asserted minimal degree. \par
Now assume $Z$ is a field. Consider a homomorphism $C \to C \otimes S \to \End_{D'\otimes Z}(E)$, where $\tilde{B} = \End_{D'\otimes Z}(E)$ is $Z \otimes S$-Azumaya of degree $2^{2k-1+d(D' \otimes Z, C \otimes S)}$. By Lemma \ref{ext} or \ref{fxfext} there exists an involution $\tilde{\tau}$ on $\tilde{B}$ which extends $\usigma \otimes \iota$, in particular $\tilde{\tau}|_{Z\otimes S} = \id_Z \otimes \iota_S$. Now if $S$ is a field, $S \not \iso Z$ Lemma \ref{ext} shows, that $\tilde{\tau}$ can be extended to an unitary involution on $\End_{D'}(E)$. If $S \iso F\times F$ we use an explicit construction: There exists  a central simple $F$-algebra $G$ and an isomorphism $(\tilde{B}, \tilde{\tau}) \iso (G\otimes Z \times \opp{G} \otimes Z, \rho)$ where $\rho$ is the involution $\rho(g_1 \otimes z_1, \opp{g_2} \otimes z_2) = (g_2 \otimes z_2, \opp{g_1} \otimes z_1)$. Clearly $\rho$ leaves $G \times \opp{G} \subset (G \times \opp{G}) \otimes Z$ invariant. By extending the identity on $Z$ to an involution of the first kind on $M_2(F)$ we get an involution $\tau$ on $M_2(G \times \opp{G}) \iso \End_{D'}(E)$ extending $\tilde{\tau}$.
\item $\deg P = 4k+2$: The involution on $C(P)$ restricts to the standard involution on $Z$. First consider the case $Z \iso S$. Observe that $(C(P),\usigma)$ and $(\opp{C(P)},\opp{\usigma})$ are isomorphic as $F$-algebras-with-involutions. There exist homomorphisms $C(P)\to B$ and $C(P) \to B'$ where $\Brcl{B} = \Brcl{B'} = c'$ and $\deg B = 2^{2k+d(c',C(P))}$, $\deg B' = 2^{2k+d(c',\opp{C(P)})}$. The unitary involutions on $C(P)$ and $\opp{C(P)}$ can be extended to unitary involutions on $B$ and $B'$, respectively. The composition with smaller degree gives the asserted minimal degree. \par
Now assume $Z$ is a field and $Z \not \iso S$. Let $\tilde{B}= \End_{D'\otimes Z}(E)$ be of minimal degree such that there exists a homomorphism $C \otimes S \to \tilde{B}$. By Lemma \ref{dbound} $\deg \tilde{B} = 2^{2k+d(D' \otimes Z,C \otimes S)}$. The involution $\usigma \otimes \iota$ on $C(P) \otimes S$ extends to an involution $\tilde{\tau}$ on $\tilde{B}$ with $\tilde{\tau}|_{Z\otimes S} = \iota_Z \otimes \iota_S$. If $S$ is a field then by Lemma \ref{ext} $\tilde{\tau}$ extends to a unitary involution $\tau$ on $B=\End_{D'}(E)$. If $S \iso F \times F$ we use the following construction: There exists a central simple $F$-algebra $G$ and an isomorphism of $F$-algebras $(\tilde{B}, \tilde{\tau}) \iso (G \otimes Z, \times \opp{G} \otimes Z, \rho)$ where $\rho(g_1 \otimes z_1, \opp{g_2} \otimes z_2) = (g_2 \otimes \bar{z_2}, \opp{g_1} \otimes z_1)$. Extending the standard involution on $Z$ to an involution of the first kind on $M_2(F)$ yields an involution $\tau$ on $M_2(G \times \opp{G}) \iso \End_{D'}(E)$ extending $\tilde{\tau}$.

\end{enumerate}
\item Compositions with involutions of the first kind:
\begin{enumerate}
\item $\deg P=2k+1$, there exists a homomorphism $C(P) \to B$, where $B$ is of Brauer class $c$ and degree $2^{k+d(c,C(P))}$. If $[C(P)]\neq c$, then $\deg C_B(C(P))$ is of the form $2^{l}, l\geq 1$ since there exist an involution of the first kind on $C_B(C(P))$, in particular even. Thus, if $[C(P)]\neq c$ or if $\usigma$ is already of the required type, the involution $\usigma$ on $C(P)$ can be extended to an involution of type $t$ on $B$. Otherwise, embedding $B$ into $M_2(F) \otimes B$ with symplectic involution on $M_2(F)$ yields a composition of type $t$ and degree $2^{k+d(c,C(P))+1}$. In both cases we get compositions of the required type and degree. \par
\item $\deg P=4k$: If $Z(C(P))\iso F\times F$, projection to $C^{\pm}$ and arguing as for odd degree of $P$ yields a composition of the required degree and type. If $Z(C(P))=K$ is a field, there exists a homomorphism $C(P) \to \tilde{B} = \End_{D\otimes K}(E) \subset \End_D(E)$, where $E$ is a faithfully projective $D\otimes K$-module and $\deg \tilde{B} = 2^{2k+d(D \otimes K, C(P))}$. Let $B=\End_D(E)$. If $[C(P)]\neq [D]$, then $\deg C_B(C(P)) = \deg C_{\tilde{B}}(C(P))$ is even. Thus, if $[C(P)] \neq [D]$ or if $\usigma$ is already of the required type, the involution $\usigma$ on $C(P)$ can be extended to an involution of type $t$ on $B$. Otherwise, after embedding $B$ in $M_2(B)$ the required type can be obtained. \par
\item $\deg P=4k+2$: There exists a homomorphism $C(P) \to \tilde{B} = \End_{D\otimes Z}(E)$, where $E$ is a faithfully projective $D\otimes Z$-module of constant rank and $\deg \tilde{B} = 2^{2k+d(D \otimes Z, C(P))}$. The unitary involution $\usigma$ on $C(P)$ can be extended to a unitary involution on $\tilde{B}$. Embedding $\tilde{B} = \End_{D\otimes Z}(E)$ in $B=\End_D(E)$ and extending the involution of $\tilde{B}$ to a involution of type $t$ on $B$ yields a composition of required type and degree $2^{2k+d(D \otimes Z, C(P))+1}$.
\end{enumerate}
\end{enumerate}
\end{proof}
\begin{proof}[Minimality of the Constructed Compositions]
\hspace*{\fill}
\begin{enumerate}
\item Compositions with unitary involutions:
\begin{enumerate}
\item $\deg P=2k+1$: Let $\alpha \colon C(P) \to B$ be a composition homomorphism. Minimality as well as the assertion, that $\deg B$ is a multiple of $2^{k+d(C(P),B)}$ follow from Lemma \ref{cbound},\ref{c1}.

\item $\deg P=4k$: Let $\alpha \colon C(P) \to B$ be a composition homomorphism. First consider the case $Z=Z(C(P))\iso F\times F$. If $\alpha$ is non-injective, then $C(P)=C^{+}\times C^{-}$ and $\alpha$ factors through either $C^{+}$ or $C^{-}$. Lemma \ref{cbound}, \ref{c2} gives $\deg B \geq 2^{2k-1+d(C^{+}\otimes S,c')}$ or $\deg B \geq 2^{2k-1+d(C^{-}\otimes S, c')}$, respectively, showing the claim. If $S=Z(B)$ is a field, then by \ref{cbound},\ref{c3} there exist $n_1,n_2 \in \mathbb{N}$, $n_1+n_2 \geq 1$ with $\deg B = 2^{2k-1} \big(n_1 2^{d(B,C^{+}\otimes S)} + n_1 2^{d(B,C^{-}\otimes S)} \big) \geq 2^{2k-1 + d(B,C^{\pm}\otimes S)}$. If $S\iso F\times F$, then by \ref{cbound},\ref{c3} there exist $n_1, n_2 \in \mathbb{N}$ with $n_1+n_2\geq 1$ and $\deg B = \deg C\big(n_1 2^{d(B^{+},C^{+})}+n_2 2^{d(B^{+},C^{-})}\big) \geq \deg C 2^{d(B^{+},C^{\pm})}$. Since there exists a unitary involution on $B = B^{+} \times B^{-}$ we have $B^{-} \iso \opp{{B^{+}}}$ and moreover $\opp{{C^{\pm}}} \iso C^{\pm}$, hence $d(B^{-},C^{\pm})=d(B^{+},\opp{{C^{\pm}}})$. Thus $2^{d(B,C^{\pm}\otimes S)} = \lcm\big(2^{d(B^{+},C^{\pm})}, 2^{d(B^{-},C^{\pm})}\big) = 2^{d(B^{+},C^{\pm})}$ shows the assertion. \par
Now assume that $Z$ is a field, $Z \not \iso S$. In that case minimality follows straightly from \ref{cbound},6.

\item $\deg P = 4k+2$: Let $\alpha \colon C(P) \to B$ be a composition homomorphism. First consider the case $Z\iso S$. Then by \ref{cbound},\ref{c4} there exist $n_1, n_2 \in \mathbb{N}$ with $n_1+n_2\geq 1$ such that 
$\deg B = 2^{2k} \big(n_1 2^{d(B,C(P))} + n_2 2^{d(\inv{B}, C(P))} \big) \geq 2^{2k + \min \{d(B,C(P)), d(\inv{B}, C(P)) \} }$ and since $d(\inv{B}, C(P)) = d(\opp{B}, C(P)) = d(B,\opp{C(P)})$ the claim follows. \par
Now assume that $Z$ is a field and $Z \not \iso S$. In that case minimality follows straightly from \ref{cbound},6.
\end{enumerate}

\item Compositions with involutions of the first kind:
\begin{enumerate}
\item $\deg P=2k+1$: Lemma \ref{dbound} gives $\deg B = n 2^{k+d(c,C(P))}$. If $[C(P)] \neq c$ or the involution $\usigma$ on $C(P)$ is of the required type, the claim follows. Otherwise $\Char F \neq 2$ ($\deg P > 1$) and we cannot have odd $n$, since $\deg C_B(C(P)) = n$ and by Lemma \ref{ext} the involution $\tau$ extending $\usigma$ would then have to be of the same type. Thus, $\deg P = m 2^{k+d(c,C(P))+1}$ with $m=\frac{n}{2} \in \mathbb{N}$.
\item $\deg P=4k$: First let $Z=Z(C(P))$ be a field. By \ref{cbound},\ref{c6} there exists $n\in \mathbb{N}$ with $\deg B = n 2^{2k+d(B\otimes Z, C(P))}$. If $[C(P)] \neq [B\otimes Z]$ or if the involution $\usigma$ on $C(P)$ is of the required type, the claim follows. Otherwise by Lemma \ref{ext} $\deg C_B(C(P))=n$ must be even and thus $\deg B = m 2^{2k+1}$ with $m = \frac{n}{2} \in \mathbb{N}$. \par
Let now $Z \iso F \times F$. By \ref{cbound},\ref{c3} there exist $n_1, n_2 \in \mathbb{N}$, $n_1+n_2\geq 1$ with $\deg B = 2^{2k-1} \big(n_1 2^{d(B,C^{+})} + n_2 2^{d(B,C^{-})} \big)$. If $[C^{+}] \neq [B]$, $[C^{-}] \neq [B]$ or if the involution $\usigma$ on $C(P)$ is of the required type, the claim follows. Otherwise, since $\usigma$ is not of the right type $\deg B \neq \deg C = 2^{2k-1}$ and thus $\deg B \geq 2^{2k}$, showing minimality.
\item $\deg P = 4k+2$: In that case $\usigma$ is unitary and every composition $\extpaircomp$ is injective. Otherwise there would exist a nontrivial central idempotent $e \in \ker \alpha$ and the equation $0 = \tau(\alpha(e)) = \alpha(\usigma(e)) = \alpha(1-e) = 1$ would lead to a contradiction. If $Z=Z(C(P))$ is a field, then by \ref{cbound},\ref{c6} there exists $n\in \mathbb{N}$ with $\deg B = n 2^{2k+1+d(B\otimes Z,C(P))}$ showing the claim. If $Z\iso F \times F$, Lemma \ref{cbound},\ref{c3} yields $n_1, n_2 \in \mathbb{N}$ with $\deg B = \big( n_1 2^{2k+d(B, C^{+})} + n_2 2^{2k+d(B, C^{-})} \big)$ which is equal to $(n_1+n_2)2^{2k+d(B\otimes Z,C(P))}$ since $2^{d(B\otimes Z, C(P))} = \lcm(2^{d(B,C^{+})},2^{d(B,C^{-})})$ and furthermore $d(B,C^{+}) = d(\opp{B}, \opp{{C^{+}}}) = d(B,C^{-})$.  
\end{enumerate}

\end{enumerate}
\end{proof}

\begin{example}
For $\deg P$ even we have computed the minimal composition degree for compositions with unitary involutions only under restriction to the center $Z$ of $C(P)$, namely we have excluded the case $Z \iso S$ being a field if $\deg P \equiv 0 \pod 4$ and $Z \iso F \times F$ where $S$ is a field if $\deg P \equiv 2 \pod 4$, respectively. Also in those cases Lemma \ref{cbound} shows how to reach the minimal degree of an algebra $B$ of given class $c \in \Brc(S)$ such that there exist a homomorphism $C(P) \to B$. The problem however is, that the canonical involution on $C(P)$ cannot be extended to $B$ in general. Consider e.g.~compositions of type $(c' = \Brcl{D'}, 0)$ for quadratic pairs of degree $4k$, where $Z=Z(C(P))$ and $S=Z(D')$ are isomorphic fields and $D'$ is some division $S$-algebra. Lemma \ref{cbound},3 gives $\deg B = 2^{2k-1} \big(n_1 2^{d(B,C)} + n_2 2^{d(\inv{B},C)}\big)$ for some $n_1,n_2 \in \mathbb{N}$ with $n_1+n_2 \geq 1$. A composition $\extpaircomp$ cannot be $Z$-linear, since the involutions on $C(P)$ and $B$ are not of the same kind. Thus it follows $n_1 \geq 1$ and $n_2 \geq 1$ as can be seen from the proof of \ref{cbound},3. It is in fact possible to construct algebra-homomorphisms $C(P) \to B$ where $\deg B$ is of the above form. For that we consider $C(P) \hookrightarrow C(P) \otimes S \iso C(P) \times \inv{C(P)}$. Choose $B_1 = \End_D'(E_1)$ and $B_2 = \End_D'(E_2)$ of minimal degree such that there exist homomorphisms $C(P) \to B_1$ and $\inv{C(P)} \to B_2$. That yields a homomorphism $C(P) \to \End_{D'}(E_1) \times \End_{D'}(E_2) \hookrightarrow B=\End_D'(E_1\oplus E_2)$ or more generally $C(P) \to B=\End_{D'}(n_1 E_1 \oplus n_2 E_2)$ for $n_1 \geq 1$, $n_2 \geq 1$. Still it is not possible to extend the canonical involution on $C(P)$, since under the above homomorphism the center of $C(P)$ maps onto the center of $B$. It becomes all much easier if we take only quadratic pairs over trivial algebras, i.e~quadratic forms. Then the Clifford algebra $(C(P),\usigma) = (C_0(V,q),\tau_0)$ can be embedded into the full Clifford algebra $C(V,q)$ (with canonical or the standard involution) and the same construction like for quadratic pairs of odd degree yields a composition of degree $2^{2k + d(C(V,q)\otimes S, c')} = 2^{2k+d(C_0(V,q),c')}$. By \ref{cbound},3 (where $n_1 n_2 = 0$ is excluded) the degree of that composition is minimal.
\end{example}

In Theorem (\ref{main-th}) a bound for $\deg B$ which is independent of the structure of the Clifford algebra $C(P)$ can be read off. The Theorem indicates as well, when that bound is reached:
\begin{corollary}
\label{main-corollary}
Let $P$ be an extended quadratic pair of degree $n>1$ and let $\extpaircomp$ a composition.
\begin{enumerate}
	\item Compositions with unitary involutions:
	\begin{enumerate}
		\item $n = 2k+1$: We have $\deg B \geq 2^k$ with equality if and only if $\Brcl{C(P) \otimes Z(B)} = c'$.
		\item $n = 4k$: We have $\deg B \geq 2^{2k-1}$ with equality if and only if $Z\iso F\times F$ and 
			\big($\Brcl{C^{+}\otimes Z(B)} = c'$ or $\Brcl{C^{-}\otimes Z(B)} = c'$\big).
		\item $n=4k+2$: We have $\deg B \geq 2^{2k}$ with equality if and only if $Z\iso S$ and $\Brcl{C(P)}=c'$.
	\end{enumerate}
	\item Compositions with involutions of the first kind: Let $t \in \mu_2(F)$ be the type of $\tau$.
	\begin{enumerate} 
		\item $n=2k+1$: We have $\deg B \geq 2^{k + \delta}$, where $\delta = 1$ if $\usigma$ is of type $t$,
		otherwise $\delta = 0$. \\
		In case of $\delta=0$ equality holds if and only if $[C(P)]=c$.
		In case of $\delta=1$ equality holds if and only if $[C(P) \otimes Q]=c$ for a quaternion algebra $Q$.
		\item $n=4k$: We have $\deg B \geq 2^{2k-1 + \delta}$, where $\delta = 1$ if $\usigma$ is of type $t$, otherwise $\delta = 0$. \\
		In case of $\delta = 0$ equality holds if and only if $Z\iso F\times F$ and \big($[C^{+}]=c$ or $[C^{-}]=c$\big).
		In case of $\delta = 1$ equality holds if and only if \big($[C^{+}\otimes Q]=c$ or $[C^{-}\otimes Q]=c$\big) for a quaternion algebra $Q$.
		\item $n=4k+2$: We have $\deg B \geq 2^{2k+1}$ with equality if and only if $\Brcl{C(P)} = \Brcl{D\otimes Z}$.
	\end{enumerate}
\end{enumerate}
\end{corollary}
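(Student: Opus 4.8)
The plan is to extract each of the six inequalities, together with its equality case, directly from Theorem \ref{main-th}, using only two soft facts: the metric $d(\cdot,\cdot)$ is $\log_2$ of an index and hence takes values in $\{0,1,2,\dots\}$, with $d(X,Y)=0$ exactly when $\Brcl{X}=\Brcl{Y}$; and, by Lemma \ref{dbound} and Lemma \ref{cbound}, the degree of $B$ for an arbitrary composition of the prescribed type is a multiple of --- or, in the even-degree cases with $Z(C)\iso F\times F$ or $Z(C)\iso Z(B)$, at least as large as --- the minimal composition degree $\mcd(P,\cdot)$. Inspecting Theorem \ref{main-th} one sees that in every case $\mcd(P,\cdot)$ equals $2$ raised to an exponent of the shape $e_0+(\text{a sum, a minimum or an }\lcm\text{ of non-negative terms }d(\cdot,\cdot))+\delta'$, where the base $e_0$ is $k$, $2k-1$, $2k$ or $2k+1$ according to the case and $\delta'\in\{0,1\}$ is the correction term of the theorem (absent, i.e. $\delta'=0$, in all unitary cases). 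Discarding the metric terms and $\delta'$ already gives $\deg B\ge 2^{e_0}$. When the prescribed type $t$ differs from the type of the canonical involution $\usigma$ --- the situation in which the corollary puts $\delta=1$ and Theorem \ref{main-th} has $\epsilon=1$ --- the extension-of-involutions step of the theorem's proof (Lemma \ref{ext}) cannot yield $\tau$ of type $t$ when the centralizer of $C(P)$ in $B$ has odd degree, i.e. when the relevant metric term vanishes; so either $\delta'=1$ or that metric term is $\ge 1$, and in both cases $\mcd(P,\cdot)\ge 2^{e_0+1}$. This establishes all six inequalities $\deg B\ge 2^{e_0+\delta}$.

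For the equality statements I would argue that $\deg B=2^{e_0+\delta}$ forces both $\deg B=\mcd(P,\cdot)$ (no extra multiplicity) and $\mcd(P,\cdot)=2^{e_0+\delta}$, and conversely the latter value is realized by the minimal composition built in the proof of Theorem \ref{main-th}; so it suffices to determine when $\mcd(P,\cdot)=2^{e_0+\delta}$. By the shape above this means that all the metric terms vanish, with the single exception that in the obstructed cases ($\delta=1$) one metric term may equal $1$ while $\delta'=0$. Now I would translate these numerical conditions into Brauer-class conditions via $d(X,Y)=0\iff\Brcl{X}=\Brcl{Y}$ and $d(X,Y)\le 1\iff X\otimes\opp{Y}$ is split or a quaternion division algebra $\iff\Brcl{X}=\Brcl{Y\otimes Q}$ for a quaternion algebra $Q$. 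Carrying this out case by case --- recalling that $Z(B)=S$ in the unitary cases, that in the cases $Z(C)\iso F\times F$ the minimum (resp.\ the $\lcm$) over the two components $C^{\pm}$ turns into ``$\Brcl{C^{+}\otimes Z(B)}=c'$ or $\Brcl{C^{-}\otimes Z(B)}=c'$'' (resp.\ its first-kind analogue with $[C^{\pm}]$), and that in the $n=4k+2$ unitary case with $Z(C)\iso S$ one has $(C,\usigma)\iso(\opp{C},\opp{\usigma})$ as $F$-algebras with involution, so that after the identification $Z(C)\iso S$ the two metric terms $d(c',C)$ and $d(c',\opp{C})$ agree and the condition collapses to $\Brcl{C(P)}=c'$ --- yields precisely the equality criteria of the corollary (e.g.\ $\deg B=2^k\iff\Brcl{C(P)\otimes Z(B)}=c'$ in the first unitary item, and $\deg B=2^{k+1}\iff[C(P)\otimes Q]=c$ for a quaternion $Q$ in the obstructed odd first-kind item).

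One loose end is that Theorem \ref{main-th} does not give $\mcd(P,\cdot)$ explicitly for two structures of $Z(C)$: $n\equiv0\pmod 4$ with $Z(C)\iso S$ a field, and $n\equiv2\pmod 4$ (unitary) with $Z(C)\iso F\times F$ and $S$ a field. For the inequalities this is harmless, since cases \ref{c4} and \ref{c3} of Lemma \ref{cbound} still give $\deg B\ge 2^{e_0}$ there. For the equality statements one observes that in these two structures a composition can never be $Z(C)$-linear, because $\usigma$ restricts to the identity (resp.\ to the conjugation) on $Z(C)$ while $\tau$ restricts to the conjugation (resp.\ to the identity) on $Z(B)$; hence in the corresponding formula of Lemma \ref{cbound} both coefficients $n_1,n_2$ are $\ge 1$, so $\deg B\ge 2\cdot 2^{e_0}$, strictly larger than the claimed minimum. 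This rules out equality on exactly these structures and pins it to $Z(C)\iso F\times F$ (for $n\equiv0$) and $Z(C)\iso S$ (for $n\equiv2$), as asserted.

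I expect the main difficulty to be organizational rather than conceptual: keeping the six cases, the three possibilities for $Z(C)$, and the matching or non-matching of involution types straight simultaneously, and in particular checking that the parameter $\delta$ of the corollary coincides with $\epsilon$ of Theorem \ref{main-th} and that replacing a minimum or an $\lcm$ of the two metric terms attached to $C^{+}$ and $C^{-}$ by an ``either/or'' statement about the two components is legitimate. No step requires any input beyond Theorem \ref{main-th}, Lemma \ref{dbound} and Lemma \ref{cbound}.
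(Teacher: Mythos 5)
Your plan is essentially the paper's own: the paper's proof is just ``Everything follows straightly from Theorem~\ref{main-th}, except the assertions about the center of the Clifford algebra in \ref{m2b}) and \ref{m3b})'', and then disposes of those two assertions by observing that a degree-minimal~$B$ has $\deg B=\deg C(P)$, that a composition with $\deg P=4k+2$ is always injective (hence an isomorphism when the degrees match, forcing $Z\iso S$), and that for $\deg P=4k$ the mismatched involutions on the centers preclude an isomorphism (forcing a nontrivial kernel, hence $Z\iso F\times F$). You reach the same places, and your treatment of the loose-end cases ($Z\iso S$ a field for $n\equiv 0\bmod 4$; $Z\iso F\times F$, $S$ a field, for $n\equiv 2\bmod 4$) via ``both $n_1,n_2\ge 1$ in Lemma~\ref{cbound}'' is an equivalent way to get the same exclusions. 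You also silently correct what appears to be a sign typo in the Corollary's description of $\delta$ ($\delta=1$ should occur when $\usigma$ is \emph{not} of type $t$, as your own unfolding of Theorem~\ref{main-th} makes clear and as the stated equality criteria confirm).

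Two of your supporting claims are, however, wrong as stated, even though the conclusions they serve are right. First, in the $n\equiv 2\bmod 4$ unitary case with $Z\iso S$ you assert that the $F$-isomorphism $(C,\usigma)\iso(\opp{C},\opp{\usigma})$ makes $d(c',C)$ and $d(c',\opp{C})$ agree. It does not: that isomorphism restricts to $\iota$ on $Z$, so over $\Brc(S)$ it sends $\Brcl{C}$ to $\Brcl{\opp{C}}=\Brcl{C}^{-1}$, and these two classes need not be equidistant from $c'$ (take $c'$ of order~$4$ in $\ker N_{S/F}$ and $\Brcl{C}=c'$). What actually makes the min collapse to the single condition ``$\Brcl{C(P)}=c'$'' is that the identification $Z\iso S$ in that condition is itself ambiguous up to $\iota$, so the Corollary's condition is a priori the disjunction $\Brcl{C}=c'$ or $\Brcl{\opp{C}}=c'$. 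Second, in your discussion of why ``a composition can never be $Z(C)$-linear'' you say that for $n\equiv 2\bmod 4$ the involution $\tau$ restricts to the identity on $Z(B)$; since $\tau$ is unitary it restricts to $\iota_S$, not the identity. The correct reason in that case is the one the paper uses: $\usigma$ (unitary, $Z\iso F\times F$) swaps $C^{+}$ and $C^{-}$, so $\ker\alpha$, being $\usigma$-stable, is zero; injectivity then forces $\alpha(e)\ne 0$ and $\alpha(1-e)\ne 0$, i.e.\ $n_1,n_2\ge 1$. Your identity/conjugation reasoning is correct for the $n\equiv 0\bmod 4$ case ($\usigma|_Z=\id$ vs.\ $\tau|_S=\iota$), but should be replaced by the injectivity argument for $n\equiv 2\bmod 4$.
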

\begin{proof}
Everything follows straightly from Theorem (\ref{main-th}), except the assertions about the center of the Clifford algebra in \ref{m2b}) and \ref{m3b}). In both cases, the degree of a minimal $B$ coincides with the degree of the Clifford algebra. Since a composition of an extended quadratic pair of degree $\deg P = 4k+2$ must be injective, both algebras are isomorphic and in particular their centers are isomorphic. On the other hand if $\deg P = 4k$, then a minimal composition cannot be an isomorphism, because the involutions on the centers are not of the same kind. Hence, the kernel is nontrivial and in particular, $Z \iso F \times F$.
\end{proof}
The reader may apply Corollary \ref{main-corollary} to compare with the results of \cite{zueger2} in the special cases of (anti-)symmetric compositions and hermitian compositions over quadratic and quaternion algebras. We are doing that only for quadratic spaces $(V,q)$ of dimension $4k$ and compositions with symplectic involutions. That corresponds to compositions with antisymmetric forms (resp.~hermitian forms over a quaternion algebra with respect to the standard involution), see \cite[Sections 4, 6]{zueger2}. According to the corollary, the minimal degree of $B$ for compositions of the form $\alpha \colon \big(C(\End_F(V), \sigma_q, f_q), \usigma_q\big) \to (B,\tau)$, $\tau$ symplectic is given by $2^{2k}$ if $\dim V \equiv 0 \pod 8$ and $\Char F \neq 2$, and $2^{2k-1}$ if $\dim V \equiv 4 \pod 8$ or $\Char F =2$, respectively. Moreover equality holds if and only if $Z\iso F\times F$ and $[C^{\pm}]=[B]$ (resp.~$[C^{\pm}] = [B \otimes Q']$ for a  quaternion algebra $Q'$). In both cases $[C^{\pm}]=[C(V,q)]$. It remains to remark, that $\deg B =\dim E$ if $B = \End_F(E)$ and $\deg B = \frac{1}{2} \dim E$ if $B = \End_Q(E)$.

\section*{Acknowledgments}
I wish to express my gratitude to Prof.~Max-Albert Knus, adviser of my diploma thesis, for a lot of support and inspiring discussions.

\bibliography{reference}
\bibliographystyle{alpha}

\end{document}